\documentclass[11pt]{article}
\usepackage{graphicx} 
\usepackage[a4paper,margin=2.5cm]{geometry}
\usepackage{amsmath,amssymb,amsthm,mathtools}
\usepackage{booktabs}
\usepackage{array}
\usepackage{enumitem}
\usepackage[colorlinks=true,linkcolor=blue,citecolor=blue,urlcolor=blue]{hyperref}

\newtheorem{theorem}{Theorem}[section]
\newtheorem{lemma}[theorem]{Lemma}
\newtheorem{proposition}[theorem]{Proposition}
\newtheorem{corollary}[theorem]{Corollary}

\theoremstyle{definition}
\newtheorem{definition}[theorem]{Definition}
\newtheorem{example}[theorem]{Examples}
\theoremstyle{remark}
\newtheorem{remark}[theorem]{Remark}
\newtheorem*{mainthmrestate}{Theorem~\ref{thm-main}}
\numberwithin{equation}{section}

\newcommand{\En}{\mathcal{E}}

\newcommand{\Rook}{\operatorname{Rook}}
\newcommand{\tr}{\operatorname{tr}}
\newcommand{\sgn}{\varsigma}

\begin{document}

  \begin{center}
 {\Large \bf Energy of signed graphs and the vertex cover number}\\

 \vspace{6mm}

 \bf {Aniruddha Samanta$^a$, Deepshikha$^b$, Kinkar Chandra Das$^{c,*}$}

 \vspace{5mm}

{\it $^a$Department of Mathematics, Birla Institute of Technology,\\
Mesra, Ranchi-835215, India\/} \\
{\tt E-mail: aniruddha.sam@gmail.com,\,aniruddha@bitmesra.ac.in}\\[2mm]

{\it $^b$Department of Mathematics, Shyampur Siddheswari Mahavidyalaya, University of Calcutta,\\
 West Bengal 711312, India\/} \\
{\tt E-mail: dpmmehra@gmail.com,\, deepshikha@ssmmahavidyalaya.edu.in}\\[2mm]

{\it $^c$Department of Mathematics, Sungkyunkwan University, \\
 Suwon 16419, Republic of Korea\/} \\
{\tt E-mail: kinkardas2003@gmail.com,\,kinkar@skku.edu}

 \hspace*{30mm}


 \end{center}

 \baselineskip=0.23in


\begin{abstract}
The energy $\mathcal{E}(\Gamma)$ of a signed graph $\Gamma=(G,\sigma)$ is the sum of the absolute values of the eigenvalues of its adjacency matrix $A(\Gamma)$. In this article, we prove that $\mathcal{E}(\Gamma)\geq 2\tau(G)$, where $\tau(G)$ is the vertex cover number of $G$. The case of unsigned graphs, a 1980s conjecture proved only recently, is a special case of this result. Moreover, we completely characterize the equality case: equality holds if and only if every connected component of $\Gamma$ is switching equivalent to an \emph{additive rook graph} $\Rook_N(a,b)$. This graph is constructed on an $N\times N$ board with $a_i+b_j$ vertices on square $(i,j)$. Two vertices are joined by a positive edge if they share only a row, and by a negative edge if they share only a column. As consequences, we obtain a balanced/antibalanced trichotomy for the extremal signed graphs and deduce the characterization of the extremal unsigned graphs. Finally, an example shows that the bound does not extend to the Hermitian adjacency matrices of mixed graphs or to complex unit gain graphs.
\end{abstract}

\noindent\textbf{Keywords:} signed graph, graph energy, vertex energy, vertex cover number, independence number, switching, balance.

\noindent\textbf{MSC 2020:} 05C50, 05C22, 05C69.

 \baselineskip=0.25in

\section*{Introduction}

Throughout this article, $G$ denotes a simple undirected graph with vertex set
$V(G)$ (or simply $V$) and edge set $E(G)$. If two vertices $u$ and $v$ are adjacent, we write $u\sim v$ and denote the edge between them by $uv$. Let
$\lambda_1\geq\lambda_2\geq\cdots\geq\lambda_n$ be the eigenvalues of the adjacency matrix
$A(G)$. The \emph{energy} of $G$ is
\[
\mathcal{E}(G)=\sum_{i=1}^{n}|\lambda_i| .
\]
Gutman introduced graph energy in 1978 \cite{Gutman1978}, motivated by its connection with the total $\pi$-electron energy of conjugated hydrocarbons. Since then it has been studied extensively; see \cite{LiShiGutman, AkbariSeidel, DaySo, SamantaKannanExtremal,DasMojallalGutmanMcClelland, DasMojallalUpper, DasMojallalSun} and the references therein. Because the energy depends on the whole spectrum, it can rarely be computed in closed form. Much of the literature is therefore devoted to bounding it in terms of simpler graph parameters, such as the number of vertices or edges, the chromatic number, the matching number and the vertex cover number, see \cite{AkbariAlazemiAndelic, Nikiforov, XuZhouWongTian, HeHaoDong}\\[-3mm]

A set $S\subseteq V(G)$ is a \emph{vertex cover} of $G$ if every edge of $G$ has at least one end in $S$. The \emph{vertex cover number} $\tau(G)$ is the size of a minimum vertex cover. A set of vertices is \emph{independent} if no two of its vertices are adjacent, and the \emph{independence number} $\alpha(G)$ is the size of a maximum independent set. The complement of a vertex cover is an independent set and vice versa, so $\tau(G)+\alpha(G)=n$.\\[-3mm]

In the 1980s, Fajtlowicz~\cite{Fajtlowicz,DeLaVina} conjectured, on the basis of his computer program Graffiti, that every graph $G$ satisfies
\begin{equation}\label{eq-bound-unsigned}
\mathcal{E}(G)\geq 2\tau(G);
\end{equation} see also \cite[Conjecture~543]{AouchicheHansen} and \cite{LiuNing}. The conjecture attracted considerable attention. Wang and Ma~\cite{WangMa} proved the weaker bound $\mathcal{E}(G)\geq2\tau(G)-2c(G)$, where $c(G)$ is the number of odd cycles of $G$, and Samanta and Kannan~\cite{GainWM} extended it to complex unit gain graphs. Wong, Wang and Chu~\cite{WongWangChu} proved $\mathcal{E}(G)\geq2\mu(G)$ in terms of the matching number $\mu(G)$, with analogues for oriented graphs~\cite{TianWongSkew}, mixed graphs~\cite{WeiLi} and complex unit gain graphs~\cite{LiYuxuan}. Since $\tau(G)\geq\mu(G)$, inequality \eqref{eq-bound-unsigned} improves this last bound. Inequality \eqref{eq-bound-unsigned} was verified for several classes of graphs in \cite{AkbariKSY,OurPaper}, and Abiad, Coutinho, Juliano and Reijnders~\cite{ACJR} obtained further partial results using semidefinite programming. The conjecture was finally settled by Kumar and Pragada~\cite{KumarPragada}, who proved that \eqref{eq-bound-unsigned} holds for every graph. Shortly afterwards, Mojallal~\cite{Mojallal} determined all graphs attaining equality.\\[-3mm]

It is natural to ask whether \eqref{eq-bound-unsigned} survives when the edges carry weights. A \emph{signed graph} \cite{ZaslavskySigned} $\Gamma=(G,\sigma)$ consists of an underlying graph $G$ together with a \emph{signature} $\sigma:E(G)\to\{-1,+1\}$. Its adjacency matrix $A(\Gamma)$ is a symmetric $|V|\times|V|$ matrix, defined by
\[
(u,v)\text{-th entry of $A(\Gamma)$}=A(\Gamma)_{uv}=
\begin{cases}
	\sigma(uv), & \text{if } u\sim v,\\
	0, & \text{otherwise,}
\end{cases}
\]
and its energy $\mathcal{E}(\Gamma)$ is the sum of the absolute values of the eigenvalues of $A(\Gamma)$. An unsigned graph is the special case in which every edge is positive, that is, $A(G)=A((G,+1))$. Signed graphs are in turn a special case of the complex unit gain graphs ($\mathbb{T}$-gain graphs) introduced by Reff \cite{Reff}, whose edges carry complex numbers of modulus one. \emph{Switching} at a vertex set $U$ reverses the sign of every edge with exactly one end in $U$; it does not change the spectrum. A signed graph is \emph{balanced} if it can be switched to the all-positive signature, and \emph{antibalanced} if $(G,-\sigma)$ is balanced.\\[-3mm]

In this article, we prove that \eqref{eq-bound-unsigned} holds for every signed graph and we completely characterize the case of equality. The extremal signed graphs are described by the following family. Let $N\geq 1$ and let $a=(a_1,\ldots,a_N)$ and $b (b_1,\ldots,b_N)$ be vectors of nonnegative integers. The \emph{additive rook graph} $\Rook_N(a,b)$ is the signed graph built on an $N\times N$ board with $a_i+b_j$ vertices on square $(i,j)$, in which two vertices are joined by a positive edge if they lie in the same row but in different columns, and by a negative edge if they lie in the same column but in different rows. All other pairs of vertices are non-adjacent. Our main result is the following.

\begin{theorem}\label{thm-main}
	Let $\Gamma=(G, \sigma)$ be a signed graph with vertex cover number $\tau(G)$. Then
	\begin{equation}\label{eq-bound-signed}
		\mathcal{E}(\Gamma)\geq 2\tau(G).
	\end{equation}
	Equality holds if and only if every connected component of $\Gamma$ is switching equivalent to
	an additive rook graph $\Rook_N(a,b)$.
\end{theorem}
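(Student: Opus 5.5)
The plan is to use the variational characterization of energy:
\[
\En(\Gamma)=\max\{\tr(A(\Gamma)X): X=X^{T},\ \|X\|\le 1\},
\]
with the maximum attained at $X=\sgn(A)$, the matrix sign of $A(\Gamma)$. To get $\En(\Gamma)\ge 2\tau(G)$ it is enough to exhibit one symmetric contraction $X$ with $\tr(AX)\ge 2\tau(G)$. Equality then forces this $X$ to coincide with the sign matrix on the range of $A$, and that rigidity is what should identify the extremal graphs.

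\emph{Step 1 (reduction and block form).} Energy and $\tau$ are both additive over components, so I may assume $G$ is connected. Switching is an orthogonal similarity by a diagonal $\pm1$ matrix, so it preserves both sides of \eqref{eq-bound-signed}. I will fix a minimum vertex cover $S$ and let $I=V\setminus S$, which is a maximum independent set. In this order of vertices,
\[
A=\begin{pmatrix}B&C\\ C^{T}&0\end{pmatrix},
\]
where $B$ is the signed adjacency matrix on $S$ and $C$ is the signed $S\times I$ biadjacency matrix. The minimality of $S$ gives a Hall-type surplus statement. For every independent set $T\subseteq S$, the set $(S\setminus T)\cup N_I(T)$ is again a cover, so $|N_I(T)|\ge |T|$. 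More generally, the minimality of $S$ constrains how $B$ and $C$ interact.

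\emph{Step 2 (constructing $X$).} I will take
\[
X=\begin{pmatrix}P&Q\\ Q^{T}&R\end{pmatrix}.
\]
The aim is to make $\tr(AX)=\tr(BP)+2\tr(CQ^{T})$ contribute on average $2$ per vertex of $S$. Concretely, I will use the Hall surplus to split $S=S_1\cup S_2$. On $S_1$ there is a matching into $I$, and I will take $Q$ supported on that matching with the matching signs; each matched edge contributes $2$ through a $2\times2$ block $\begin{pmatrix}0&\pm1\\ \pm1&0\end{pmatrix}$. On $S_2$ I will use $P$ built from the sign of $B$ restricted to $S_2$. To bound $\tr(BP)$ on $S_2$ from below by $2|S_2|$, I will run an induction on $|V|$: delete a vertex whose vertex energy $e_v=|A|_{vv}$ is at least $1$ (equivalently, one whose deletion drops $\tau$), and use the interlacing/pinching inequality $\En(A)\ge \En(A[V\setminus v])+(\text{gain at }v)$. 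For signed graphs the key local estimate is that for any vertex $v$,
\[
e_v\ge \frac{d_v}{\sqrt{\lambda_{\max}(A^2)}} ,
\]
and that summing $e_v$ over a suitable fractional structure recovers $2\tau$. Sign changes do not affect $A^2$ on the diagonal, and that is why I expect the unsigned argument of Kumar and Pragada to transfer essentially verbatim.

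\emph{Step 3 (equality).} If equality holds, every inequality in the chain must be tight. In particular $\|X\|=1$ must be attained on the whole range of $A$, and $e_v$ must equal the prescribed value for every vertex. I expect this to force each vertex of $S$ to have $A$-neighbourhood structure of a "row plus column" type. Vertices that are twins (with equal signed neighbourhoods) must then be grouped into cells of an $N\times N$ board, with same-row pairs positive and same-column pairs negative after switching. This recovers $\Rook_N(a,b)$, and the cell sizes $a_i+b_j$ come from the additive rank-two form of the tight multiplicities. For the converse, the spectrum of $\Rook_N(a,b)$ can be computed directly, since its adjacency matrix is a signed sum of two Kronecker-type block matrices. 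From that computation one checks $\En=2\tau$ with $\tau=\sum_{i,j}(a_i+b_j)-\max(\text{independent set})$.

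The main obstacle is Step 2. A minimum cover need not be matchable into $I$, so the part $S_2$ must be handled through $B$, and making the local estimates add up exactly to $2\tau$, rather than to the fractional value $2\tau^*$, is the delicate point. I would first try to follow the unsigned proof's decomposition step by step and check that only $A^2$ and $|A|$ enter, since those quantities are insensitive to the signature up to switching.
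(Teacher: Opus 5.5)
Your proposal has a genuine gap at Step 2, which carries the whole inequality, and the devices you name provably cannot close it. Take $K_n$ with $n\ge3$, which is extremal: $\mathcal{E}(K_n)=2(n-1)=2\tau$. Here $|I|=1$, so your matching uses one vertex of $S$, and $S_2$ induces $K_{n-2}$. Your $X$ then gives $\operatorname{tr}(AX)=2+\mathcal{E}(K_{n-2})=2(n-2)<2\tau$. No choice within your template can help. Since $A=J-I$ is nonsingular and $\mathcal{E}(A)=2\tau$, a symmetric contraction with $\operatorname{tr}(AX)\ge2\tau$ must satisfy $u_k^{\top}Xu_k=\pm1$, hence $Xu_k=\pm u_k$, for every eigenvector. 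So $X=\frac{2}{n}J-I$, whose $S\times I$ block has no zero entry.

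The vertex-deletion induction fails as well. $C_4$ is extremal ($\mathcal{E}=4$, $\tau=2$), and every vertex has $|A|_{vv}=1$ and lies in a minimum cover. Yet $\mathcal{E}(C_4)-\mathcal{E}(C_4-v)=4-2\sqrt2<2$, so no inequality $\mathcal{E}(\Gamma)\ge\mathcal{E}(\Gamma-v)+2$ is available. Your claimed equivalence is also false: an end vertex of $P_4$ has $|A|_{vv}=2/\sqrt5<1$, although deleting it lowers $\tau$. The estimate $|A|_{vv}\ge d_v/\|A\|$ is true, but it sums only to $2m/\|A\|$, which equals $n$ for $K_n$ against $2\tau=2(n-1)$.

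The missing idea, used in the paper, is to delete closed neighbourhoods and average over all vertices. Induction gives $\mathcal{E}(\Gamma-N[v])\ge2\tau-2d_v$. In the other direction, write $P=A_+$, $Q=A_-$, $B=|A|$ with Gram vectors $p_u,q_u$. Then $P_{uv}=Q_{uv}$ whenever $u\not\sim v$. So projecting $p_u,q_u$ ($u\notin N[v]$) orthogonally to $p_v,q_v$ splits $A(\Gamma-N[v])$ as a difference of PSD matrices. This yields $\mathcal{E}(\Gamma-N[v])\le\sum_{u\notin N[v]}\bigl(B_{uu}-B_{uv}^2/B_{vv}\bigr)$. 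Sum over $v$, using $(B^2)_{vv}=d_v$ and the $2\times2$ minor bound $B_{uu}B_{vv}\ge(1+|B_{uv}|)^2$ on edges. This gives $\sum_v\mathcal{E}(\Gamma-N[v])\le n\,\mathcal{E}(\Gamma)-4m$, and comparing with the lower bound $2n\tau-4m$ finishes.

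Step 3 inherits the gap. With no tight chain of inequalities, "every inequality is tight" carries no information. Grouping twins into cells of size $a_i+b_j$ restates the answer rather than deriving it. In the paper the rigidity comes from the chain above. Equality forces $B_{uu}=B_{vv}=1+|B_{uv}|$ on every edge, hence constant vertex energy $2\tau/n$ and $|B_{uv}|=(\tau-\alpha)/n$ on edges. On edges this gives $\{|P_{uv}|,|Q_{uv}|\}=\{\tau/n,\alpha/n\}$, with $\|p_v\|^2=\|q_v\|^2=\tau/n$. By Cauchy--Schwarz, the relations $p_u=\pm p_v$ and $q_u=\pm q_v$ define rows and columns. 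For $\alpha<n/2$, $u\sim v$ exactly when one of them holds; the case $\alpha=n/2$ gives $K_{a,a}$. The signs are normalized by showing that the defect $RR^{\top}-A_+$ has rank one. Positivity of $A_\pm$ then bounds the numbers of rows and columns by $n/\alpha$. Averaging over rook placements finally forces $w_{ij}=a_i+b_j$.

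For the converse, your Kronecker description applies only when all cells have equal size. The clean route is the dual of your own variational formula. Write $A(\Rook_N(a,b))=(RR^{\top}-\frac{1}{N}J)-(CC^{\top}-\frac{1}{N}J)$, where both terms are PSD with trace $n-n/N=\tau$; this gives $\mathcal{E}\le2\tau$, once $\alpha=\ell$ is known from rook placements.
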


Theorem~\ref{thm-main} reveals features that do not appear in the unsigned case. The class of underlying graphs that attain equality is strictly larger than for unsigned graphs, and equality is attained by some signed graphs that are neither balanced nor antibalanced. This leads to a balanced/antibalanced trichotomy for the extremal signed graphs. Restricting Theorem~\ref{thm-main} to unsigned graphs, we recover Mojallal's characterization \cite{Mojallal}. On the other hand, the bound does not extend further: we give an example showing that \eqref{eq-bound-signed} fails for the Hermitian adjacency matrices of mixed graphs and for $\mathbb{T}$-gain graphs.\\[-3mm]

The article is organized as follows. Section~\ref{sec-prelim} collects the notation, definitions and preliminary results used throughout. In Section~\ref{sec-rook}, we introduce additive rook graphs and establish their basic properties. Section~\ref{sec-bound} contains the proof of \eqref{eq-bound-signed}. Section~\ref{sec-equality} characterizes equality in two steps: we first show that every additive rook graph attains equality, and then prove the converse, namely that every signed graph attaining equality is switching equivalent to a disjoint union of additive rook graphs. Finally, in Section~\ref{sec-consequences}, we state Mojallal's theorem in full and deduce it from Theorem~\ref{thm-main}, and we present an example showing that the bound cannot be extended to Hermitian adjacency matrices of mixed graphs or to $\mathbb{T}$-gain graphs.\\

\section{Preliminaries}\label{sec-prelim}

Throughout this article, $G$ denotes a finite simple undirected graph with vertex set $V(G)$ and edge set $E(G)$. We put $n=|V(G)|$ and $m=|E(G)|$. The edge joining two vertices $u$ and $v$ is written $uv$, and we write $u\sim v$ when $uv\in E(G)$. For a vertex $v$, the set $N(v)=\{u\in V(G): u\sim v\}$ is its neighbourhood, $N[v]=N(v)\cup\{v\}$ is its closed neighbourhood, and $d_v=|N(v)|$ is its degree. For a signed graph $\Gamma=(G,\sigma)$, every purely graph-theoretic notion is taken from the underlying graph $G$. So the vertex covers and the independent sets of $\Gamma$ are those of $G$, and we write $\tau(\Gamma)=\tau(G)$ and $\alpha(\Gamma)=\alpha(G)$. For $U\subseteq V(G)$, the signed graph induced by $\Gamma$ on $V(G)\setminus U$ is denoted by $\Gamma-U$, and its edges keep their signs from $\Gamma$.\\[-3mm]

If the rows and columns of  a matrix $M$ are indexed by $V(G)$, then the $(u,v)$-th entry of $M$ is written $M_{uv}$. The adjacency matrix $A(\Gamma)$ is the real symmetric matrix with $A(\Gamma)_{uv}=\sigma(uv)$ if $u\sim v$, and $A(\Gamma)_{uv}=0$ otherwise. If $\lambda_1\geq\cdots\geq\lambda_n$ are the eigenvalues of $A(\Gamma)$, the energy of $\Gamma$ is defined as $\mathcal{E}(\Gamma)=\sum_{k=1}^{n}|\lambda_k|$.\\[-3mm]

A \emph{switching function} is a map $\zeta:V(G)\to\{1,-1\}$. It turns $\Gamma=(G,\sigma)$ into the signed graph $\Gamma^{\zeta}=(G,\sigma^{\zeta})$, where $\sigma^{\zeta}(uv)=\zeta(u)\,\sigma(uv)\,\zeta(v)$ for every edge $uv$. We then say that $\Gamma$ and $\Gamma^{\zeta}$ are \emph{switching equivalent}, and write $\Gamma\sim\Gamma^{\zeta}$. With $D=\operatorname{diag}\bigl(\zeta(v)\bigr)_{v\in V(G)}$ we have $A(\Gamma^{\zeta})=D\,A(\Gamma)\,D$, and we also write $\Gamma^{D}$ for $\Gamma^{\zeta}$. As $D^{-1}=D$, switching equivalent signed graphs have the same eigenvalues, and so the same energy. The sign of a cycle is the product of the signs of its edges, and the cycle is \emph{positive} when this product equals $1$. A signed graph is \emph{balanced} if all of its cycles are positive. The negation of $\Gamma$ is $-\Gamma=(G,-\sigma)$. Since $A(-\Gamma)=-A(\Gamma)$, we get $\mathcal{E}(-\Gamma)=\mathcal{E}(\Gamma)$. We call $\Gamma$ \emph{antibalanced} if $-\Gamma$ is balanced.\\[-3mm]

For a real symmetric matrix $S$, we write $S\succcurlyeq0$ if $S$ is positive semidefinite, and $S\succcurlyeq T$ if $S-T\succcurlyeq0$. We use $J$ for the all-ones matrix and $\mathbf{1}$ for the all-ones vector, and we write $\mathbf{1}_k$ when its length $k$ has to be shown. For other notation and basic facts from matrix theory, we refer to~\cite{HornJohnson}. In addition, we shall repeatedly use the following standard facts.\\[-3mm]

\begin{itemize}
  \item[\textbf{(F1)}] Every real symmetric $n\times n$ matrix $S$ can be written as
  $S=\sum_{k=1}^{n}\lambda_k u_ku_k^{\top}$, where $\lambda_1,\ldots,\lambda_n$ are the
  eigenvalues of $S$ and $u_1,\ldots,u_n$ is an orthonormal basis of corresponding eigenvectors.

  \item[\textbf{(F2)}] For $S$ as in (F1), the matrices $S_{+}=\sum_{\lambda_k>0}\lambda_k u_ku_k^{\top}$ and $S_{-}=-\sum_{\lambda_k<0}\lambda_k u_ku_k^{\top}$ satisfy $S_{\pm}\succcurlyeq0$,
  $S_{+}S_{-}=S_{-}S_{+}=0$ and $S=S_{+}-S_{-}$. We put $|S|=S_{+}+S_{-}$. Note that $|S|$ is not the matrix of the absolute values of the entries of $S$. The energy of $S$ is $\mathcal{E}(S)=\operatorname{tr}|S|=\sum_k|\lambda_k|$. If in addition $\operatorname{tr}S=0$, then $\operatorname{tr}S_{+}=\operatorname{tr}S_{-}=\tfrac12\mathcal{E}(S)$.

  \item[\textbf{(F3)}] A matrix $S$ with rows and columns indexed by $V$ satisfies
  $S\succcurlyeq0$ if and only if there are vectors $s_v$ $(v\in V)$ in some $\mathbb{R}^k$ such that $S_{uv}=\langle s_u,s_v\rangle$ for all $u,v\in V$. These vectors are called a \emph{Gram representation} of $S$.

  \item[\textbf{(F4)}] For $x,y\in\mathbb{R}^k$ we have $|\langle x,y\rangle|\leq\|x\|\,\|y\|$, and equality holds if and only if $x$ and $y$ are linearly dependent. In particular, if $\|x\|=\|y\|=r>0$, then $\langle x,y\rangle=r^2$ exactly when $x=y$, and $\langle x,y\rangle=-r^2$ exactly when $x=-y$.
\end{itemize}

Finally, we recall two families of graphs. The complete multipartite graph $K_{x,\ldots,x}$ with $r$ parts consists of $r$ independent sets of size $x$, and two vertices are adjacent exactly when they lie in different parts. For $r\geq2$ and $x,y\geq1$, the graph $H_r(x,y)$ is built from two complete multipartite graphs, one with parts $X_1,\ldots,X_r$ of size $x$ and the other with parts $Y_1,\ldots,Y_r$ of size $y$, by joining every vertex of $X_i$ to every vertex of $Y_i$ for each $i$. Thus the edges of $H_r(x,y)$ are exactly the pairs $uv$ with $u\in X_i$, $v\in X_j$ and $i\neq j$, the pairs $uv$ with $u\in Y_i$, $v\in Y_j$ and $i\neq j$, and the pairs $uv$ with
$u\in X_i$ and $v\in Y_i$.

\section{Additive rook graphs and independence numbers }\label{sec-rook}

In this section we introduce \emph{additive rook graphs}. These signed graphs turn out to be exactly the extremal graphs for the energy bound \eqref{eq-bound-signed}. We first define a more general family called \emph{signed grid graphs} and obtain additive rook graphs as a special case. We then determine the independence number and the vertex cover number of additive rook graphs. Finally, we prove a few further properties that are needed in Section~\ref{sec-equality}.

\begin{definition}[Signed grid graph]\label{def-grid}
	Let $W=(w_{ij})$ be an $s\times t$ matrix of nonnegative integers. The \emph{signed grid graph} $\Gamma[W]$ has vertex set $V:=\bigsqcup_{i,j}V_{ij}$ where $|V_{ij}|=w_{ij}$. The set $V_{ij}$ is called the \emph{square} $(i,j)$. The $i$-th \emph{row} is $R_i:=\bigcup_{j=1}^{t}V_{ij}$ and the $j$-th \emph{column} is $C_j:=\bigcup_{i=1}^{s}V_{ij}$. For $v\in V_{ij}$ we write $r(v)=i$ and $c(v)=j$, meaning that $v$ lies in the $i$-th row and the $j$-th column. Two distinct vertices $u$ and $v$ of $\Gamma[W]$ are joined by a positive edge if $r(u)=r(v)$ and $c(u)\neq c(v)$, and by a negative edge if $c(u)=c(v)$ and $r(u)\neq r(v)$. All other pairs of distinct vertices are non-adjacent.
\end{definition}

Next we define two matrices and a graph.

\begin{definition}\label{def-incidence}
	Let $\Gamma[W]$ be the signed grid graph of an $s\times t$ matrix $W$. We define the following matrices and graph associated with $\Gamma[W]$.
	\begin{itemize}
		\item[(i)] The \emph{row-incidence matrix} $R$ is the $|V|\times s$ matrix with $R_{vi}=1$ if $r(v)=i$ and $R_{vi}=0$ otherwise.
        
		\item[(ii)] The \emph{column-incidence matrix} $C$ is the $|V|\times t$ matrix with $C_{vj}=1$ if $c(v)=j$ and $C_{vj}=0$ otherwise.
        
		\item[(iii)] The \emph{cell graph} $\mathcal{B}(W)$ is the bipartite graph whose vertices are the nonempty rows and the nonempty columns, in which row $i$ is adjacent to column $j$ if and only if $w_{ij}>0$.
	\end{itemize}
\end{definition}

\begin{example}
	Let $W=\begin{bmatrix}
		 3 & 2 & 3 \\
		2 & 3 & 2 
	\end{bmatrix}$ . The signed grid graph $\Gamma[W]$ is shown in Figure ~\ref{fig-grid}. In $\Gamma[W]$, the vertex set $V=\cup_{i=1}^{2}\cup_{j=1}^{3}V_{ij}$. It has two rows, $R_1=V_{11}\cup V_{12} \cup V_{13}$ and $R_2=V_{21}\cup V_{22} \cup V_{23}$. Also, it has three columns $C_1=V_{11}\cup V_{21}$, $C_2=V_{12}\cup V_{22}$, and $C_3=V_{13}\cup V_{23}$. 
\end{example}

\begin{figure}
	\begin{center}
		\includegraphics[scale= 0.37]{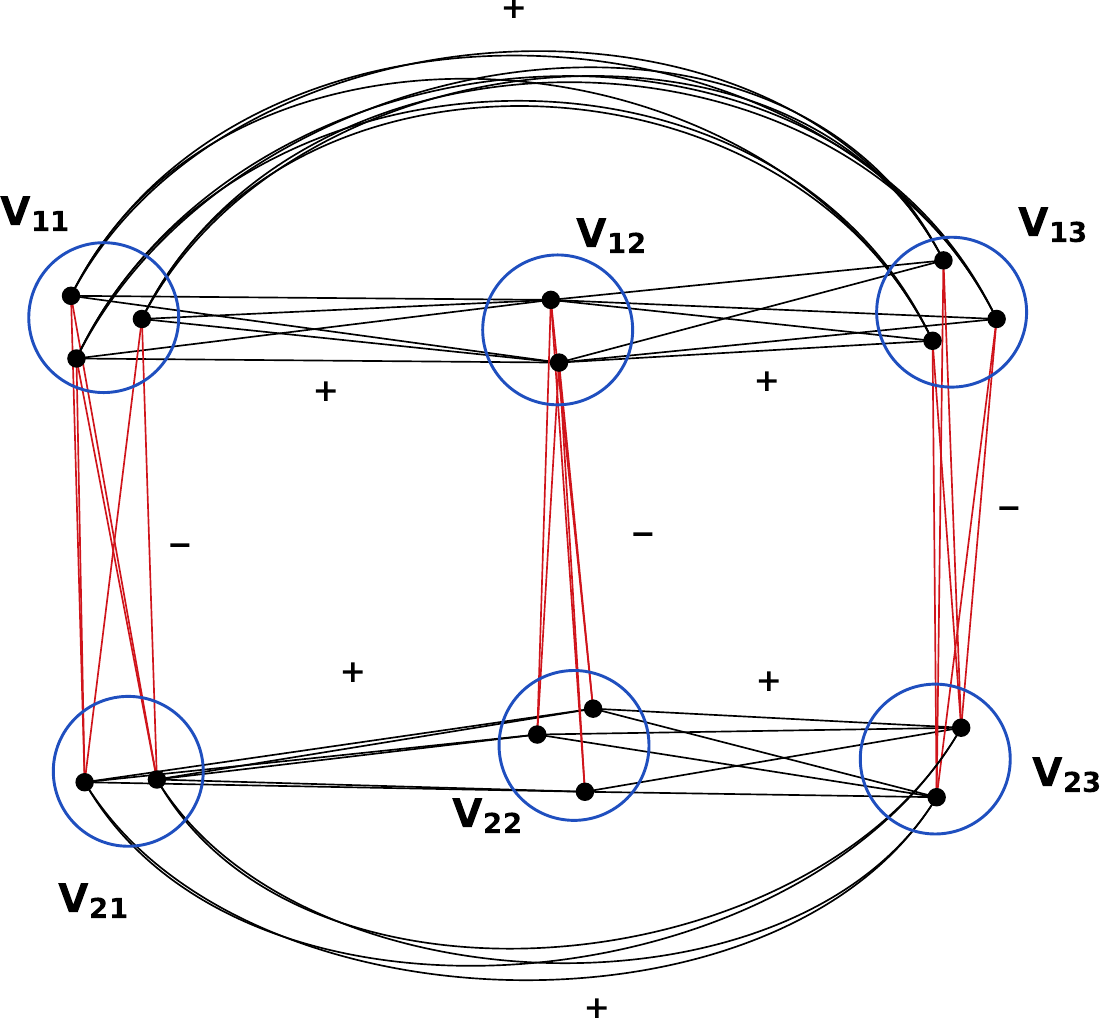}
		\caption{The signed grid graph $\Gamma[W]$.}\label{fig-grid}
	\end{center}
\end{figure}

\begin{definition}[Additive rook graph]\label{def-rook}
	Let $N\geq1$ and let $a=(a_1,\ldots,a_N)$ and $b=(b_1,\ldots,b_N)$ be nonnegative integer vectors, not both zero. The \emph{additive rook graph} $\Rook_N(a,b)$ is the signed grid graph $\Gamma[W]$, where $W$ is the $N\times N$ matrix with entries $w_{ij}=a_i+b_j$. We denote $\ell:=\sum_{i=1}^{N}a_i+\sum_{j=1}^{N}b_j .$
\end{definition}

Thus $\Rook_N(a,b)$ is obtained by placing $a_i+b_j$ vertices on square $(i,j)$ of an $N\times N$ chessboard; vertices sharing only a row are joined by positive edges, and vertices sharing only a column by negative edges.

\begin{example}\label{ex-members}
	The following signed graphs are additive rook graphs up to switching equivalence. Here $\mathbf{0}$ is the zero vector of suitable size.
	\begin{center}
		\begin{tabular}{@{}ll@{}}
			\toprule
			Signed graph & As $\Rook_N(a,b)$\\
			\midrule
			$K_1$ & $\Rook_1\bigl((1),(0)\bigr)$\\
			$K_{x,\ldots,x}$ with $r$ parts & $\Rook_r\bigl((x,0,\ldots,0),\mathbf{0}\bigr)$\\
			$H_r(x,y)$ & $\Rook_r\bigl((x,y,0,\ldots,0),\mathbf{0}\bigr)$\\
			$-K_{x,\ldots,x}$ and $-H_r(x,y)$ & interchange $a$ and $b$ above\\
			$K_N\,\square\,K_N$, rows positive, columns negative & $\Rook_N\bigl((1,\ldots,1),\mathbf{0}\bigr)$\\
			positive $K_{1,2,1}$ and negative $K_{2,1,1}$ glued along the part of size $2$ & $\Rook_3\bigl((1,0,0),(0,1,0)\bigr)$(\textbf{See})\\
			\bottomrule
		\end{tabular}
	\end{center}
    
Consider the third line of the table. In $\Rook_r\bigl((x,y,0,\ldots,0),\mathbf{0}\bigr)$ only the first two rows of the board are nonempty. Write $X_j=V_{1j}$ and $Y_j=V_{2j}$, so that $|X_j|=x$ and $|Y_j|=y$. Two different squares $X_j$ and $X_l$ are joined by positive edges, and the same holds for $Y_j$ and $Y_l$. Each $X_j$ is joined to $Y_j$ by negative edges, and there are no other edges. Switching at all vertices of $Y_1\cup\cdots\cup Y_r$ makes every edge positive. The resulting graph is exactly $H_r(x,y)$ of Mojallal~\cite{Mojallal}.
\end{example}

We now list some basic properties of signed grid graphs. Here $\mathbf{1}_k$ denotes the
all-ones column vector of length $k$.

\begin{lemma}\label{lem-grid}
	Let $W$ be an $s\times t$ matrix of nonnegative integers and let $V$ be the vertex set of $\Gamma[W]$.
	\begin{enumerate}[label=(\roman*)]
		\item $A(\Gamma[W])=RR^\top-CC^\top$. Moreover, $R\mathbf{1}_s=C\mathbf{1}_t=\mathbf{1}_{|V|}$, and all diagonal entries of $RR^\top$ and $CC^\top$ are equal to $1$.
        
		\item A set $U\subseteq V$ is independent if and only if the squares meeting $U$ lie in pairwise distinct rows and pairwise distinct columns.
        
		\item $\Gamma[W^\top]=-\Gamma[W]$, where square $(i,j)$ of $W$ is identified with square $(j,i)$ of $W^\top$. In particular, the two signed graphs have the same underlying graph.
        
		\item Adding or deleting zero rows or zero columns of $W$ does not change $\Gamma[W]$ or $\mathcal{B}(W)$.
        
		\item If $\Gamma[W]$ is connected, then so is $\mathcal{B}(W)$.
	\end{enumerate}
\end{lemma}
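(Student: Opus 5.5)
The plan is to verify the five items one at a time, directly from Definition~\ref{def-grid} and Definition~\ref{def-incidence}. All of them are elementary; the only one needing a short argument is (v).

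For (i), we compute entries. For $u,v\in V$,
\[
(RR^\top)_{uv}=\sum_i R_{ui}R_{vi}=[r(u)=r(v)],\qquad (CC^\top)_{uv}=[c(u)=c(v)].
\]
Hence $(RR^\top-CC^\top)_{uv}$ equals $1$ if $u,v$ share a row but not a column, and $-1$ if they share a column but not a row. It equals $0$ if they share both (in particular when $u=v$) or neither. For distinct $u,v$ this is exactly $A(\Gamma[W])_{uv}$, and the diagonal is $0$ as required. Each row of $R$ (respectively $C$) has exactly one $1$, so $R\mathbf{1}_s=C\mathbf{1}_t=\mathbf{1}_{|V|}$. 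The diagonal entries of $RR^\top$ and $CC^\top$ are $[r(v)=r(v)]=1$.

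For (ii), two distinct vertices are non-adjacent exactly when they lie in the same square, or in different rows and different columns. So $U$ is independent if and only if any two of its vertices in distinct squares occupy distinct rows and distinct columns. This is the stated condition on the squares meeting $U$.

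For (iii), transposing swaps the roles of $r$ and $c$. Pairs sharing only a row in $W$ share only a column in $W^\top$, so every sign flips while adjacency is preserved. Item (iv) holds because a zero row or column contributes no vertices and no cell-graph vertices, since $\mathcal{B}(W)$ uses only nonempty rows and columns.

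For (v), assume $\Gamma[W]$ is connected, and take two vertices of $\mathcal{B}(W)$, say nonempty lines $L,L'$. Pick $u\in L$ and $v\in L'$, and a path $u=x_0\sim x_1\sim\cdots\sim x_k=v$ in $\Gamma[W]$. Each $x_p$ lies in a square $(r(x_p),c(x_p))$ with $w>0$, which is an edge of $\mathcal{B}(W)$ joining row $r(x_p)$ and column $c(x_p)$. Consecutive $x_p,x_{p+1}$ share a row or a column, so consecutive such edges share an endpoint. Concatenating gives a walk in $\mathcal{B}(W)$ from $L$ to $L'$. The only slightly delicate point is the degenerate case where $\Gamma[W]$ is a single square, possibly with several vertices but no edges. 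There $\mathcal{B}(W)$ is a single edge, hence connected, and connectivity requires $|V|=1$ anyway; I do not expect any real obstacle.
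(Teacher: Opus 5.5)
Your proposal is correct and follows the paper's proof essentially item by item, using the same entrywise computation of $RR^\top$ and $CC^\top$ for (i)--(iv). The only difference is cosmetic and lies in (v): you project a path of $\Gamma[W]$ directly onto a walk in $\mathcal{B}(W)$, whereas the paper argues by contraposition, splitting $V$ according to the components of $\mathcal{B}(W)$ and noting there are no edges between the parts. Both arguments rest on the same observation, namely that adjacent vertices occupy squares which are incident edges of $\mathcal{B}(W)$.
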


\begin{proof}
	(i) Since every vertex lies in exactly one row, $(RR^\top)_{uv}=\sum_iR_{ui}R_{vi}$ is $1$ if $r(u)=r(v)$ and $0$ otherwise. Similarly, $(CC^\top)_{uv}$ is $1$ if $c(u)=c(v)$ and $0$ otherwise. Hence $(RR^\top-CC^\top)_{uv}$ equals $1$ if $u$ and $v$ share only a row, $-1$ if they share only a column, and $0$ if they share both or neither (in particular when $u=v$). This is exactly the adjacency rule of $\Gamma[W]$. Each row of $R$ and of $C$ has a single entry $1$, which gives the remaining assertions.\\

	\noindent(ii) Two distinct vertices are non-adjacent precisely when they lie in the same square or have neither a row nor a column in common. So $U$ is independent if and only if no two of the squares meeting $U$ share a row or a column.\\

	\noindent(iii) Transposing $W$ interchanges the row and the column of each vertex. Hence the roles of $R$ and $C$ are exchanged, and by (i) the adjacency matrix becomes $CC^\top-RR^\top=-A(\Gamma[W])$.\\
	
	\noindent(iv) A zero row or column contains no vertices and is not a vertex of $\mathcal{B}(W)$.\\
	
	\noindent(v) Each nonempty square $(i,j)$ is an edge of $\mathcal{B}(W)$, and $\mathcal{B}(W)$ has no isolated vertices. Suppose $\mathcal{B}(W)$ is disconnected, let $K$ be one of its components, and let $U$ be the set of vertices of $\Gamma[W]$ whose square is an edge of $K$. Then $U\neq\emptyset$, and $V\setminus U\neq\emptyset$ since another component also contains an edge. If $u\in U$ and $v\notin U$, their squares lie in different components of $\mathcal{B}(W)$, so they share no row and no column, and $u,v$ are non-adjacent. Therefore $\Gamma[W]$ is disconnected.
\end{proof}

Next we compute the independence number of $\Gamma[W]$ by means of rook placements. We write $[k]=\{1,\ldots,k\}$.

\begin{lemma}\label{lem-rook}
	Let $W$ be an $s\times t$ matrix of nonnegative integers with $s\leq t$, and let $n=\sum_{i,j}w_{ij}$. For an injection function $\varphi:[s]\to[t]$ put $\eta(\varphi)=\sum_{i=1}^{s}w_{i\varphi(i)}$; this is the number of vertices on the squares occupied by the $s$ non-attacking rooks placed at $(i,\varphi(i))$. Then
	\begin{enumerate}[label=(\alph*)]
		\item $\alpha(\Gamma[W])=\max_{\varphi}\eta(\varphi)$;
        
		\item the average of $\eta(\varphi)$ over all injections $\varphi$ is $n/t$; hence
		$\alpha(\Gamma[W])\geq n/t$;
        
		\item $\alpha(\Gamma[W])=n/t$ if and only if $\eta$ is constant;
        
		\item $\eta$ is constant if and only if either $s<t$ and $w_{ij}=a_i$ for some integers $a_i$, or $s=t$ and $w_{ij}=a_i+b_j$ for some integers $a_i,b_j$.
	\end{enumerate}
\end{lemma}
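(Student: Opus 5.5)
Part (a) follows from Lemma~\ref{lem-grid}(ii). An independent set $U$ meets squares lying in distinct rows and distinct columns. Since all vertices of a square can be taken together, a maximum independent set is the union of the full squares $V_{i\varphi(i)}$ over a partial injection. Because $w_{ij}\ge 0$ and $s\le t$, any partial injection extends to a full injection $\varphi:[s]\to[t]$ without decreasing the count. Conversely, for every injection $\varphi$ the set $\bigcup_i V_{i\varphi(i)}$ is independent. Hence $\alpha=\max_\varphi \eta(\varphi)$.

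For (b) we count. There are $t!/(t-s)!$ injections, and for fixed $(i,j)$ the number of injections with $\varphi(i)=j$ is $(t-1)!/(t-s)!$, a fraction $1/t$ of the total. By linearity, the average of $\eta$ is $\sum_{i,j} w_{ij}/t=n/t$, and the maximum is at least the average. For (c): the maximum of a function equals its average if and only if the function is constant.

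Part (d) is the main step. The "if" direction is immediate. If $w_{ij}=a_i$, then $\eta(\varphi)=\sum_i a_i$. If $s=t$ and $w_{ij}=a_i+b_j$, then $\varphi$ is a bijection and $\eta(\varphi)=\sum a_i+\sum b_j$.

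For "only if" I will use transpositions.
\begin{itemize}
\item \emph{Case $s=t$.} Take any permutation $\varphi$ and rows $i\ne k$. Swapping $\varphi(i)$ and $\varphi(k)$ gives $w_{ij}+w_{kl}=w_{il}+w_{kj}$ for all $i\ne k$ and $j\ne l$, since every such pair of positions $(i,j),(k,l)$ can be completed to a permutation. The same identity is trivial when $i=k$ or $j=l$. So every $2\times2$ "cross difference" vanishes, and $w_{ij}-w_{i1}=w_{1j}-w_{11}$. Setting $a_i=w_{i1}$ and $b_j=w_{1j}-w_{11}$ gives $w_{ij}=a_i+b_j$.
\item \emph{Case $s<t$.} There is a column outside the image of $\varphi$. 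Redirecting a single row $i$ from $\varphi(i)=j$ to an unused column $l$ gives $w_{ij}=w_{il}$. Any two columns $j,l$ can play these roles: choose an injection with $\varphi(i)=j$ that avoids $l$, which is possible because $s<t$. Hence each row is constant, and $w_{ij}=a_i$.
\end{itemize}

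The subtle point is checking that these completions to injections always exist. This holds since $s\le t$; in the case $s=t$ we need $t\ge 2$ for distinct $j,l$, and otherwise the statement is trivial.
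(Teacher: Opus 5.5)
Your proposal is correct and follows essentially the same route as the paper. Both arguments obtain (a) from Lemma~\ref{lem-grid}(ii) by extending a partial rook placement to a full injection, (b) by the $1/t$ counting, and (c) from ``maximum equals average.'' Part (d) also matches: for $s=t$ you swap two images to get $w_{ij}+w_{kl}=w_{il}+w_{kj}$, and for $s<t$ you move one row's rook to an unused column, which is possible since $s-1\le t-2$.
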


\begin{proof}
	(a) For each injection $\varphi$, the set $U_\varphi=\bigcup_iV_{i\varphi(i)}$ is independent by Lemma~\ref{lem-grid}(ii), and $|U_\varphi|=\eta(\varphi)$. Conversely, let $U$ be independent. By Lemma~\ref{lem-grid}(ii), the squares meeting $U$ are $(i_1,j_1),\ldots,(i_k,j_k)$ with distinct $i$'s and distinct $j$'s. Since $t-k\geq s-k$, the remaining $s-k$ rows can be mapped injectively into the $t-k$ unused columns. This extends $i_r\mapsto j_r$ to an injection $\varphi$ with $U\subseteq U_\varphi$, so $|U|\leq\eta(\varphi)$.\\

	\noindent(b) There are $t!/(t-s)!$ injections, and exactly $(t-1)!/(t-s)!$ of them satisfy $\varphi(i)=j$ for a fixed pair $(i,j)$, that is, a fraction $1/t$ of all. Hence the average of $\eta$ is $\frac1t\sum_{i,j}w_{ij}=n/t$, and the maximum is at least this value.\\
	
	\noindent(c) The maximum of finitely many numbers equals their average if and only if they are all equal.\\
	
	\noindent(d) If $s<t$ and $w_{ij}=a_i$, then $\eta\equiv\sum_ia_i$. If $s=t$ and $w_{ij}=a_i+b_j$, then every injection is a permutation and $\eta\equiv\sum_ia_i+\sum_jb_j$.\\[-3mm]
	
	Conversely, suppose $\eta$ is constant. Let $s<t$. Fix a row $i$ and columns $j\neq j'$. As $s-1\leq t-2$, the other $s-1$ rows can be mapped injectively into $[t]\setminus\{j,j'\}$. Extending this map by $i\mapsto j$ or by $i\mapsto j'$ gives two injections of equal weight, so $w_{ij}=w_{ij'}$. Hence $w_{ij}=a_i$ for all $i,j$.\\[-3mm]
	
	Let $s=t$. If $s=1$, take $a_1=w_{11}$ and $b_1=0$. If $s\geq2$, let $i\neq k$ and $j\neq l$. Choose a permutation $\pi$ with $\pi(i)=j$ and $\pi(k)=l$, and let $\pi'$ agree with $\pi$ except that $\pi'(i)=l$ and $\pi'(k)=j$. Then $\eta(\pi)=\eta(\pi')$ gives
	\begin{equation}\tag{E}\label{eq-E}
		w_{ij}+w_{kl}=w_{il}+w_{kj}.
	\end{equation}
	This identity is trivial when $i=k$ or $j=l$, so it holds for all $i,j,k,l$. Setting
	$a_i=w_{i1}$ and $b_j=w_{1j}-w_{11}$, identity \eqref{eq-E} with $k=l=1$ gives
	$w_{ij}=a_i+b_j$.
\end{proof}

\begin{remark}\label{rem-nonneg}
	The integers $a_i,b_j$ in Lemma~\ref{lem-rook}(d) may be chosen nonnegative. Indeed, let
	$b_{j_0}=\min_jb_j$ and replace $a_i$ by $a_i+b_{j_0}$ and $b_j$ by $b_j-b_{j_0}$. This does not
	change $w_{ij}$; the new $b_j$ are nonnegative, and the new $a_i$ equal $w_{ij_0}\geq0$. Hence,
	if $W$ is square with $w_{ij}=a_i+b_j$, then $\Gamma[W]$ is an additive rook graph.
\end{remark}

\begin{corollary}\label{cor-rookinv}
	The additive rook graph $\Rook_N(a,b)$ has $n=N\ell$ vertices, $\alpha=\ell$ and
	$\tau=(N-1)\ell$.
\end{corollary}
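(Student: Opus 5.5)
The plan is to apply Lemma~\ref{lem-rook} directly to the $N\times N$ matrix $W$ with entries $w_{ij}=a_i+b_j$. Here $s=t=N$, so Lemma~\ref{lem-rook} is applicable with $t=N$.

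First I would count the vertices. Summing over all squares gives
\[
n=\sum_{i,j}(a_i+b_j)=N\sum_i a_i+N\sum_j b_j=N\ell .
\]

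Next I would compute the independence number. Because $s=t$ and $W$ has the additive form $w_{ij}=a_i+b_j$, part~(d) of Lemma~\ref{lem-rook} says that $\eta$ is constant. Explicitly, for every permutation $\varphi$,
\[
\eta(\varphi)=\sum_i a_i+\sum_i b_{\varphi(i)}=\ell .
\]
By part~(a), $\alpha(\Gamma[W])=\max_\varphi\eta(\varphi)=\ell$. Parts~(b) and~(c) give the same conclusion, since the constant value $\ell$ equals the average $n/t=N\ell/N$.

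Finally, $\tau=n-\alpha=N\ell-\ell=(N-1)\ell$, using the relation $\tau(G)+\alpha(G)=n$ from the introduction.

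There is no real obstacle in this argument. The only point to check is that the hypothesis $s\le t$ of Lemma~\ref{lem-rook} holds, which it does because the board is square. Zero rows or columns cause no trouble either: Lemma~\ref{lem-rook} places no nonemptiness condition on $W$, and Lemma~\ref{lem-grid}(iv) shows such rows and columns do not change the graph.
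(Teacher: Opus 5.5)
Your proposal is correct and follows the paper's proof essentially verbatim: count $n=\sum_{i,j}(a_i+b_j)=N\ell$, observe that every permutation has weight $\sum_i a_i+\sum_i b_{\pi(i)}=\ell$, apply Lemma~\ref{lem-rook}(a) to get $\alpha=\ell$, and conclude $\tau=n-\alpha=(N-1)\ell$. Your extra remarks on parts (b)--(d) and on zero rows or columns are harmless but not needed.
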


\begin{proof}
	We have $n=\sum_{i,j}(a_i+b_j)=N\ell$. Every permutation $\pi$ of $[N]$ has weight
	$\sum_ia_i+\sum_ib_{\pi(i)}=\ell$, so $\alpha=\ell$ by Lemma~\ref{lem-rook}(a), and
	$\tau=n-\alpha=(N-1)\ell$.
\end{proof}

We now recall the main result of this article, stated in the introduction.

\begin{mainthmrestate}
Let $\Gamma=(G,\sigma)$ be a signed graph. Then $\mathcal{E}(\Gamma)\geq2\tau(G)$. Equality
holds if and only if every connected component of $\Gamma$ is switching equivalent to an
additive rook graph $\Rook_N(a,b)$.
\end{mainthmrestate}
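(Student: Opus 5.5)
We will prove the inequality by the Kumar--Pragada type argument, adapted to signs. Write $A=A(\Gamma)=A_{+}-A_{-}$ as in (F2). Since $\operatorname{tr}A=0$, we have $\operatorname{tr}A_{+}=\operatorname{tr}A_{-}=\tfrac12\mathcal{E}(\Gamma)$, so $\mathcal{E}(\Gamma)=\operatorname{tr}|A|=\sum_{v}|A|_{vv}$. The key claim is a local one: for every edge $uv$,
\[
|A|_{uu}+|A|_{vv}\geq 2,
\]
or more precisely $(A_{+})_{uu}+(A_{+})_{vv}\geq 1$ and $(A_{-})_{uu}+(A_{-})_{vv}\geq 1$.

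To prove the claim, take Gram representations (F3): $(A_{+})_{xy}=\langle p_x,p_y\rangle$ and $(A_{-})_{xy}=\langle q_x,q_y\rangle$. Since $A_{uu}=0$, we get $\|p_u\|=\|q_u\|$. For an edge, $\pm1=A_{uv}=\langle p_u,p_v\rangle-\langle q_u,q_v\rangle$, so by (F4) and AM--GM,
\[
1\leq \|p_u\|\|p_v\|+\|q_u\|\|q_v\|=2\|p_u\|\|p_v\|\leq \|p_u\|^2+\|p_v\|^2 .
\]
This yields $|A|_{uu}+|A|_{vv}=2(\|p_u\|^2+\|p_v\|^2)\geq 2$. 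So the weights $x_v=\tfrac12|A|_{vv}$ form a fractional vertex cover. This alone gives only $\mathcal{E}\geq 2\tau^{*}$ with $\tau^{*}$ the fractional cover number, which can be smaller than $\tau$, so a sharper ingredient is required.

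Following Kumar--Pragada, we will strengthen the claim to a closed-neighbourhood or independent-set statement. For any independent set $I$ with vector $\mathbf 1_I$, the restriction of $A$ to $I$ is zero. Hence the principal submatrix of $A_{+}$ on $I$ equals that of $A_{-}$, and we will argue that the complement $V\setminus I$ carries trace at least $|V\setminus I|$ when $I$ is a maximum independent set. The concrete route is to consider $M=A_{+}+A_{-}$ restricted appropriately and to use the Schur-complement or interlacing inequality $\operatorname{tr}|A|\geq \operatorname{tr}|A[S]|$ (energy of a principal submatrix is at most the energy of the matrix). We then induct on $n$: delete a vertex $v$ with the smallest $|A|_{vv}$ or a suitable closed neighbourhood, and use $\tau(G)\leq \tau(G-N[v])+d_v$ together with the Gram-vector inequality to control the loss. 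Since the paper's signed setting only changes signs in $\langle p_u,p_v\rangle-\langle q_u,q_v\rangle$ and the Cauchy--Schwarz step uses absolute values, we expect the unsigned proof to transfer verbatim once it is phrased through Gram vectors. This transfer is the main obstacle: identifying exactly which inequality in the unsigned proof uses positivity of entries.

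For equality, the sufficiency direction is a computation. By Lemma~\ref{lem-grid}(i), $A=RR^\top-CC^\top$. For additive rook graphs we will compute the spectrum via the $2N$-dimensional space spanned by the columns of $R$ and $C$ (the rest of the space is the kernel), and show $\operatorname{tr}A_{+}=\operatorname{tr}RR^\top$ restricted suitably, giving $\mathcal{E}=2(N-1)\ell$, which is $2\tau$ by Corollary~\ref{cor-rookinv}. For necessity, all the inequalities must be tight: (F4) forces $p_v=\pm p_u$ and $q_v=\pm q_u$ along edges, with all norms $\|p_v\|^2=\tfrac12\cdot$ a constant. Switching so that signs align, the vertices group into classes by their $p$-vector (rows) and their $q$-vector (columns), and $A=RR^\top-CC^\top$ exhibits $\Gamma$ as a signed grid graph $\Gamma[W]$. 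Tightness $\alpha=n/t$ together with Lemma~\ref{lem-rook}(c),(d), Remark~\ref{rem-nonneg} and Lemma~\ref{lem-grid}(iii),(v) (for connectedness and the case $s\neq t$) then gives $w_{ij}=a_i+b_j$ on a square board, that is, an additive rook graph.
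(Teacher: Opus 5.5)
Your proposal has genuine gaps in both halves, and the first is the one you flagged yourself.

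\textbf{The inequality.} Your edge computation is correct but, as you say, it only yields the fractional bound, and the tools you then name do not close the gap. Write $B=|A|$ and $S(v)=V\setminus N[v]$. Take even the principal-submatrix bound $\mathcal{E}(\Gamma-N[v])\le\sum_{u\in S(v)}B_{uu}$, combine it with the induction hypothesis $\mathcal{E}(\Gamma-N[v])\ge2\tau-2d_v$, and sum over $v$. This gives only $n\,\mathcal{E}\ge 2n\tau-4m+\sum_u(d_u+1)B_{uu}$. To conclude you would need $\sum_u(d_u+1)B_{uu}\ge4m$, which already fails for $K_{a,a}$ with $a\ge2$: there $B_{vv}=1$, so the left side is $2a(a+1)<4a^2$.

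The missing idea is a Schur-complement correction. For $u\in S(v)$ one has $A_{uv}=0$, hence $(A_+)_{uv}=(A_-)_{uv}=\tfrac12B_{uv}$. So projecting every $p_u$ orthogonally to $p_v$, and every $q_u$ orthogonally to $q_v$, leaves $\langle p'_u,p'_w\rangle-\langle q'_u,q'_w\rangle=A_{uw}$ intact on $S(v)$. The variational trace bound then gives $\mathcal{E}(\Gamma-N[v])\le\sum_{u\in S(v)}\bigl(B_{uu}-B_{uv}^2/B_{vv}\bigr)$. The remaining steps are:
\begin{enumerate}
\item Sum over \emph{all} $v$; nothing singles out a vertex of minimal $B_{vv}$.
\item Rewrite $\sum_{u\in S(v)}B_{uv}^2=d_v-B_{vv}^2-\sum_{u\sim v}B_{uv}^2$, using $(B^2)_{vv}=(A^2)_{vv}=d_v$.
\item Reduce to $F_{uv}=\frac{(B_{uu}-1)^2-B_{uv}^2}{B_{uu}}+\frac{(B_{vv}-1)^2-B_{uv}^2}{B_{vv}}\ge0$ on each edge.
\end{enumerate}
The last step needs the product inequality $B_{uu}B_{vv}\ge(1+|B_{uv}|)^2$, which comes from the principal $2\times2$ minors of $B\pm A$ on $\{u,v\}$. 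Your additive bound $B_{uu}+B_{vv}\ge2$ throws away $B_{uv}$ and cannot control the $-B_{uv}^2$ terms. Your alternative idea, that the complement of a maximum independent set carries enough trace, fails for $A_+$: in $K_{1,3}$ the centre has $(A_+)_{vv}=\sqrt3/2<1=\tau$.

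\textbf{Necessity of the equality condition.} The overall shape (rows from $p$, columns from $q$, then the rook-placement lemma) matches the paper, but your tightness claim is wrong. The edge inequality you would make tight is not in the chain that becomes equality. What equality actually forces is $B_{vv}=2\tau/n$ for every $v$ and $|B_{uv}|=(\tau-\alpha)/n$ on every edge. Hence on an edge, $|(A_+)_{uv}|$ and $|(A_-)_{uv}|$ are $\tau/n$ and $\alpha/n$ in some order, while $\|p_v\|^2=\|q_v\|^2=\tau/n$. When $\alpha<n/2$, \emph{exactly one} of $p_u=\pm p_v$ and $q_u=\pm q_v$ holds on each edge, and both or neither hold on each non-edge. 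This exclusive-or rule is what produces the grid. If both held on every edge, as you assert, connectivity would make all $p_v$ equal up to sign and all $q_v$ equal up to sign, leaving no rows or columns to read off. Your version is correct only in the boundary case $\alpha=n/2$ (where $B_{vv}=1$), which must be handled separately and gives $K_{a,a}$.

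Two further steps are only asserted:
\begin{itemize}
\item ``Switching so that signs align'' is not automatic. After switching so that $p_v$ depends only on the row, write $q_v=\theta_v\hat q_{c(v)}$; the column edges then carry signs $-\theta_u\theta_v$. A second switching that is constant on rows and absorbs $\theta$ on columns exists only if a cycle condition on the cell graph holds. The paper obtains it by showing that the defect $RR^\top-A_+$ has all its columns in a space of dimension at most one and diagonal entries $\alpha/n\neq0$. Hence it equals $\frac{\alpha}{n}\varepsilon\varepsilon^\top$ for a suitable $\varepsilon\in\{\pm1\}^V$.
\item ``Tightness $\alpha=n/t$'' needs the upper bound $\max(s,t)\le n/\alpha$. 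This comes from positive semidefiniteness of the switched $A_+=RR^\top-\frac{\alpha}{n}J$, tested on one vertex per row. Averaging over rook placements only gives $\alpha\ge n/t$.
\end{itemize}

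\textbf{Sufficiency.} Your plan is workable, but it is simpler to avoid the spectrum. The matrices $RR^\top-\frac1NJ$ and $CC^\top-\frac1NJ$ are positive semidefinite, differ by $A$, and each has trace $n-n/N=\tau$. So the variational trace bound gives $\mathcal{E}\le2\tau$, and the inequality supplies the reverse.
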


The inequality is proved in Section~\ref{sec-bound}, and the characterization of equality in Section~\ref{sec-equality}.

\section{Proof of $\mathcal{E}(\Gamma)\geq2\tau(G)$}\label{sec-bound}

In this section we prove the inequality part of Theorem~\ref{thm-main}. We first collect some results from linear algebra and matrix theory that are needed in the proof. Few parts of the argument follow/adapts the approach of Kumar and Pragada~\cite{KumarPragada} for unsigned graphs. For the sake of completeness, we include all the details. Our main novel contribution, the characterization of equality, is established in the later sections~\ref{sec-equality}.

\begin{lemma}\label{lem-var}
Let $M$ be a real symmetric $n\times n$ matrix. If $M=X-Y$ with $X,~Y ~\succcurlyeq0$, then
\[
\mathcal{E}(M)\leq\operatorname{tr}X+\operatorname{tr}Y .
\]
\end{lemma}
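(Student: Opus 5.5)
The plan is to reduce the claim to a trace inequality that compares the positive and negative parts of $M$ with $X$ and $Y$. By (F2) write $M=M_{+}-M_{-}$ with $M_{\pm}\succcurlyeq0$ and $M_{+}M_{-}=0$, so that $\mathcal{E}(M)=\operatorname{tr}M_{+}+\operatorname{tr}M_{-}$. It therefore suffices to prove the two inequalities $\operatorname{tr}M_{+}\leq\operatorname{tr}X$ and $\operatorname{tr}M_{-}\leq\operatorname{tr}Y$. The two cases are symmetric: replacing $M$ by $-M=Y-X$ exchanges $M_{+}$ with $M_{-}$ and $X$ with $Y$. Hence only the first inequality needs a proof.

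To compare $\operatorname{tr}M_{+}$ with $\operatorname{tr}X$, introduce the orthogonal projection $P=\sum_{\lambda_k>0}u_ku_k^{\top}$ onto the span of the eigenvectors of $M$ with positive eigenvalues, using the spectral decomposition (F1). Then $PMP=M_{+}$, because $P$ kills the eigenvectors with eigenvalue $\leq0$ and fixes those with positive eigenvalue. Consequently
\[
\operatorname{tr}M_{+}=\operatorname{tr}(PMP)=\operatorname{tr}(PXP)-\operatorname{tr}(PYP).
\]
Next, $PYP\succcurlyeq0$ because $Y\succcurlyeq0$, so $\operatorname{tr}(PYP)\geq0$ and $\operatorname{tr}M_{+}\leq\operatorname{tr}(PXP)$. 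It remains to show $\operatorname{tr}(PXP)\leq\operatorname{tr}X$. By cyclicity, $\operatorname{tr}(PXP)=\operatorname{tr}(XP^2)=\operatorname{tr}(XP)$. We also have $\operatorname{tr}X-\operatorname{tr}(XP)=\operatorname{tr}\bigl(X(I-P)\bigr)=\operatorname{tr}\bigl((I-P)X(I-P)\bigr)\geq0$, since $I-P$ is again a symmetric idempotent and $(I-P)X(I-P)\succcurlyeq0$.

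Applying the same argument to $-M=Y-X$ gives $\operatorname{tr}M_{-}\leq\operatorname{tr}Y$, and adding the two inequalities yields $\mathcal{E}(M)\leq\operatorname{tr}X+\operatorname{tr}Y$. Nothing here is a real obstacle. The only point needing care is the identity $PMP=M_{+}$ together with the use of cyclicity to pass from $\operatorname{tr}(PXP)$ to $\operatorname{tr}(XP)$.
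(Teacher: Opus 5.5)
Your proof is correct and is essentially the paper's argument written with the projection $P$: the paper sums over the orthonormal eigenbasis of $M$, and $\operatorname{tr}(PXP)=\sum_{\lambda_k>0}u_k^{\top}Xu_k$ and $\operatorname{tr}\bigl((I-P)X(I-P)\bigr)=\sum_{\lambda_k\leq0}u_k^{\top}Xu_k\geq0$ are exactly the two inequalities it chains. The one difference is cosmetic: you handle $\operatorname{tr}M_{-}\leq\operatorname{tr}Y$ by applying the same bound to $-M=Y-X$, where the paper repeats the computation directly.
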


\begin{proof}
By (F1), let $u_1,\ldots,u_n$ be an orthonormal basis of eigenvectors of $M$ with
$Mu_k=\lambda_ku_k$. Let $U$ be the $n\times n$ matrix with columns $u_1,\ldots,u_n$. The $(j,k)$-th entry of $U^{\top}U$ is $u_j^{\top}u_k$, which equals $1$ if $j=k$ and $0$ otherwise. Hence $U^{\top}U=I$ and therefore, for any $n\times n$ matrix $Z$, 
\[
\tr (Z)=\tr(ZI)=\tr(ZU^TU)=\operatorname{tr}(U^{\top}ZU)=\sum_{k=1}^{n}u_k^{\top}Zu_k .
\]
Therefore,
$\operatorname{tr}X=\sum_{k=1}^{n}u_k^{\top}Xu_k$ and $\operatorname{tr}Y=\sum_{k=1}^{n}u_k^{\top}Yu_k$. As $X\succcurlyeq0$ and $Y\succcurlyeq0$, every term $u_k^{\top}Xu_k$ and $u_k^{\top}Yu_k$ is nonnegative. Hence
\[
\operatorname{tr}X\;\geq\;\sum_{\lambda_k>0}u_k^{\top}Xu_k
\;\geq\;\sum_{\lambda_k>0}u_k^{\top}(X-Y)u_k=\sum_{\lambda_k>0}u_k^{\top}Mu_k
=\sum_{\lambda_k>0}\lambda_k ,
\]
and similarly
\[
\operatorname{tr}Y\;\geq\;\sum_{\lambda_k<0}u_k^{\top}Yu_k
\;\geq\;\sum_{\lambda_k<0}u_k^{\top}(Y-X)u_k=-\sum_{\lambda_k<0}\lambda_k
=\sum_{\lambda_k<0}|\lambda_k| .
\]
Adding the two inequalities gives $\operatorname{tr}X+\operatorname{tr}Y\geq\sum_{k}|\lambda_k|=\mathcal{E}(M)$.
\end{proof}


To simplify notation, we fix the following for the rest of the article. For a signed graph $\Gamma=(G,\sigma)$, let $A=A(\Gamma)$, and let $P=A_{+}$, $Q=A_{-}$ and $B=|A|=P+Q$ be as in (F2). By (F3), we also fix Gram representations $P_{uv}=\langle p_u,p_v\rangle$ and $Q_{uv}=\langle q_u,q_v\rangle$ for all vertices $u,v$.

\begin{lemma}\label{lem-parts}
Let $\Gamma$ be a signed graph, and let $A$, $P$, $Q$, $B$ and the Gram vectors $p_v,q_v$ be as fixed above. Then the following hold.
\begin{enumerate}[label=(\arabic*)]
  \item $A=P-Q$, $B=P+Q$, $PQ=QP=0$, $B^2=A^2$ and $\operatorname{tr}B=\mathcal{E}(\Gamma)$.
  
  \item $P_{vv}=Q_{vv}=\tfrac12B_{vv}$ for every vertex $v$. If $u\neq v$ are
  non-adjacent, then $P_{uv}=Q_{uv}=\tfrac12B_{uv}$.
  
  \item $(B^2)_{vv}=d_v$ for every vertex $v$.
  
  \item Let $D=\operatorname{diag}(\delta_v)$ with $\delta_v\in\{1,-1\}$, and let $\Gamma^D$ be the signed graph with $A(\Gamma^D)=DAD$. Then $A(\Gamma^D)_{\pm}=DA_{\pm}D$ and $|A(\Gamma^D)|=D|A|D$. Consequently, the vectors $\delta_vp_v$ and $\delta_vq_v$ form Gram representations of $A(\Gamma^D)_{+}$ and $A(\Gamma^D)_{-}$, respectively.
\end{enumerate}
\end{lemma}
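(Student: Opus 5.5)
We verify the four parts in order, using only the spectral decomposition (F1)–(F2) of $A$ and the fact that $A$ has zero diagonal and entries $\pm1$ exactly on the edges.

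For (1), write $A=\sum_k\lambda_ku_ku_k^\top$ with orthonormal $u_k$. Then $A=P-Q$ and $B=P+Q$ hold by definition, and $PQ=QP=0$ follows from $u_j^\top u_k=0$ for $j\neq k$, as recorded in (F2). Expanding gives
\[
B^2=P^2+Q^2=(P-Q)^2=A^2,
\]
since the cross terms vanish. Finally, $\operatorname{tr}B=\sum_k|\lambda_k|=\mathcal{E}(\Gamma)$.

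For (2), the key observation is that $P$ and $Q$ are determined by $A$ and $B$:
\[
P=\tfrac12(B+A),\qquad Q=\tfrac12(B-A).
\]
Hence $P_{uv}-Q_{uv}=A_{uv}$ for all $u,v$. Whenever $A_{uv}=0$ we get $P_{uv}=Q_{uv}=\tfrac12B_{uv}$. This covers both the diagonal entries, since $A_{vv}=0$, and pairs $u\neq v$ that are non-adjacent.

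For (3), note that $B^2=A^2$. Therefore
\[
(B^2)_{vv}=(A^2)_{vv}=\sum_uA_{vu}^2=\sum_{u\sim v}\sigma(uv)^2=d_v,
\]
because every nonzero entry of $A$ is $\pm1$.

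For (4), note that $D$ is orthogonal and $D=D^\top=D^{-1}$. Hence
\[
DAD=\sum_k\lambda_k(Du_k)(Du_k)^\top,
\]
where the vectors $Du_k$ are again orthonormal and are eigenvectors of $DAD$ with the same eigenvalues $\lambda_k$.

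The positive and negative parts in (F2) are defined using any orthonormal eigenbasis. We could check that the result does not depend on the chosen basis, since each part is a sum of spectral projections. It is simpler to argue directly: $DA_+D$ and $DA_-D$ are both positive semidefinite, their products vanish because $DA_+DDA_-D=DA_+A_-D=0$, and their difference is $DAD$. Together with the decomposition over the basis $Du_k$, this identifies them as $A(\Gamma^D)_{+}$ and $A(\Gamma^D)_{-}$. Summing gives $|A(\Gamma^D)|=D|A|D$.

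Finally, for the Gram vectors we compute
\[
(DPD)_{uv}=\delta_u\delta_vP_{uv}=\langle\delta_up_u,\delta_vp_v\rangle,
\]
and the same computation applies to $Q$. So the vectors $\delta_vp_v$ and $\delta_vq_v$ form Gram representations of $A(\Gamma^D)_{+}$ and $A(\Gamma^D)_{-}$.

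The only delicate point in the whole argument is the well-definedness of $A_\pm$ under a change of eigenbasis in (4). This is standard, because $A_\pm$ are polynomials in $A$, namely $\tfrac12(|A|\pm A)$ with $|A|=(A^2)^{1/2}$, so it poses no real obstacle.
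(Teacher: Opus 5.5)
Your proof is correct and follows essentially the same route as the paper. Parts (1)--(3) go through $P=\tfrac12(B+A)$, $Q=\tfrac12(B-A)$ and $B^2=A^2$, and part (4) transports the spectral decomposition to the orthonormal eigenbasis $Du_k$ of $DAD$. Your extra remark that $A_\pm$ do not depend on the chosen eigenbasis covers a point the paper leaves implicit, and your justification of it is sound.
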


\begin{proof}
(1) All assertions except the last follow from (F2), and $\operatorname{tr}B=\mathcal{E}(\Gamma)$ is the formula for the energy in (F2). Since $PQ=QP=0$, we get $B^2=(P+Q)^2=P^2+Q^2=(P-Q)^2=A^2$.\\[-2mm]

\noindent(2) By (1), $P=\tfrac12(B+A)$ and $Q=\tfrac12(B-A)$. Since $A_{vv}=0$, this gives $P_{vv}=Q_{vv}=\tfrac12B_{vv}$. If $u\neq v$ are non-adjacent, then $A_{uv}=0$, and in the same way $P_{uv}=Q_{uv}=\tfrac12B_{uv}$.\\[-2mm]

\noindent(3) By (1), $(B^2)_{vv}=(A^2)_{vv}=\sum_{u}A_{uv}^2$. Each neighbour $u$ of $v$ contributes $\sigma(uv)^2=1$, and every other vertex contributes $0$. Hence $(B^2)_{vv}=d_v$.\\[-2mm]

\noindent(4) Write $A=\sum_k\lambda_ku_ku_k^{\top}$ as in (F1). Then $DAD=\sum_k\lambda_k(Du_k)(Du_k)^{\top}$, and the vectors $Du_k$ are again orthonormal because $D^{\top}D=I$. So this is a spectral decomposition of $DAD$ with the same eigenvalues, and therefore
\[
A(\Gamma^D)_{+}=\sum_{\lambda_k>0}\lambda_kDu_ku_k^{\top}D=DPD,\qquad
A(\Gamma^D)_{-}=DQD,\qquad |A(\Gamma^D)|=DBD .
\]
Finally, $(DPD)_{uv}=\delta_u\delta_vP_{uv}=\langle\delta_up_u,\delta_vp_v\rangle$, and similarly $(DQD)_{uv}=\langle\delta_uq_u,\delta_vq_v\rangle$.
\end{proof}


The number $B_{vv}=|A|_{vv}$ is the \emph{vertex energy} $\En_\Gamma(v)$ of~\cite{ArizmendiVertex}; note that $\sum_v\En_\Gamma(v)=\En(\Gamma)$.

\begin{lemma}\label{lem-edge}
If $uv\in E$, then $B_{uu}B_{vv}\ge(1+|B_{uv}|)^2$. In particular $B_{vv}>0$ for every non-isolated vertex $v$.
\end{lemma}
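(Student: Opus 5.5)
We use the Gram representations fixed above. Since $P,Q\succcurlyeq0$, $B=P+Q$ is positive semidefinite, and we can form Gram vectors $b_v=(p_v,q_v)$ in the direct sum, so that $B_{xy}=\langle b_x,b_y\rangle$. The aim is to bound $B_{uu}B_{vv}$ from below by the square of a quantity involving $B_{uv}$. By Lemma~\ref{lem-parts}(2) we have $P_{uu}=Q_{uu}=\tfrac12B_{uu}$, so $\|p_u\|^2=\|q_u\|^2=\tfrac12B_{uu}$, and likewise for $v$. For the adjacent pair we have $A_{uv}=\sigma(uv)=\pm1$, hence
\[
P_{uv}=\tfrac12\bigl(B_{uv}+\sigma(uv)\bigr),\qquad Q_{uv}=\tfrac12\bigl(B_{uv}-\sigma(uv)\bigr).
\]

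Next we apply Cauchy--Schwarz (F4) to each pair separately:
\[
|P_{uv}|=|\langle p_u,p_v\rangle|\le\|p_u\|\|p_v\|=\tfrac12\sqrt{B_{uu}B_{vv}},
\]
and the same bound holds for $|Q_{uv}|$. Thus
\[
\sqrt{B_{uu}B_{vv}}\ \ge\ \max\bigl(|B_{uv}+\sigma(uv)|,\ |B_{uv}-\sigma(uv)|\bigr).
\]
Because $\sigma(uv)=\pm1$ and $B_{uv}$ is real, this maximum equals $|B_{uv}|+1$: we pick the sign agreeing with the sign of $B_{uv}$, and either choice works if $B_{uv}=0$. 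Squaring gives $B_{uu}B_{vv}\ge(1+|B_{uv}|)^2$.

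For the consequence, let $v$ be non-isolated with a neighbour $u$. The product $B_{uu}B_{vv}\ge1>0$, and both factors are nonnegative diagonal entries of a positive semidefinite matrix. Hence $B_{vv}>0$.

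There is no real obstacle here. The only point needing care is choosing the correct one of $P$ or $Q$ according to the sign of $B_{uv}$, so that the two contributions add rather than cancel. One could also drop the vectors entirely and use the $2\times2$ principal minors of $P$ and $Q$, which are nonnegative; that gives the same inequalities.
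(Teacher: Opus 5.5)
Your proof is correct and is essentially the paper's argument: the paper uses the nonnegative $2\times2$ principal minors of $2P=B+A$ and $2Q=B-A$ on $\{u,v\}$, which is exactly your Cauchy--Schwarz bound on $p_u,p_v$ and $q_u,q_v$, and then the same observation that one of $|B_{uv}\pm A_{uv}|$ equals $1+|B_{uv}|$. The Gram vectors $b_v$ for $B$ that you introduce at the start are never used and can be dropped.
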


\begin{proof}
Let $x=B_{uu}$, $y=B_{vv}$, $z=B_{uv}$ and $\sgn=A_{uv}\in\{\pm1\}$. The matrices $2P=B+A$ and $2Q=B-A$ are PSD, and their principal submatrices on $\{u,v\}$ are
\[
\begin{pmatrix}x&z+\sgn\\ z+\sgn&y\end{pmatrix},\qquad
\begin{pmatrix}x&z-\sgn\\ z-\sgn&y\end{pmatrix}.
\]
By (P2), $xy\ge(z\pm\sgn)^2$. One of $z+\sgn$ and $z-\sgn$ has absolute value $1+|z|$. Hence $xy\ge(1+|z|)^2\ge1$, and since $x,y\ge0$ we get $x,y>0$.
\end{proof}


\begin{lemma}\label{lem-local}
Let $v$ be non-isolated and $S(v):=V\setminus N[v]$. Then
\[
\En(\Gamma-N[v])\le\sum_{u\in S(v)}B_{uu}-\frac1{B_{vv}}\sum_{u\in S(v)}B_{uv}^2 .
\]
\end{lemma}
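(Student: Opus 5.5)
Write $H=\Gamma-N[v]$, so $V(H)=S(v)$. The induced matrix $A(H)$ is the principal submatrix $A_{SS}$ of $A=P-Q$ on $S=S(v)$. The plan is to produce a decomposition $A_{SS}=X-Y$ with $X,Y\succcurlyeq0$ and
\[
\operatorname{tr}X+\operatorname{tr}Y=\sum_{u\in S}B_{uu}-\frac1{B_{vv}}\sum_{u\in S}B_{uv}^2,
\]
and then invoke Lemma~\ref{lem-var}. The natural first attempt, $X=P_{SS}$ and $Y=Q_{SS}$, gives only $\operatorname{tr}X+\operatorname{tr}Y=\sum_{u\in S}B_{uu}$. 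The correction term must come from projecting away the direction of the Gram vectors $p_v$ and $q_v$.

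Since every $u\in S$ is non-adjacent to $v$, Lemma~\ref{lem-parts}(2) gives
\[
P_{uv}=Q_{uv}=\tfrac12B_{uv}\quad(u\in S),\qquad P_{vv}=Q_{vv}=\tfrac12B_{vv}>0,
\]
where positivity comes from Lemma~\ref{lem-edge}. Let $c\in\mathbb{R}^S$ be the vector with entries $c_u=B_{uv}$. Define
\[
X=P_{SS}-\frac{1}{2B_{vv}}cc^\top,\qquad Y=Q_{SS}-\frac{1}{2B_{vv}}cc^\top .
\]
Then $X-Y=P_{SS}-Q_{SS}=A_{SS}$. Moreover,
\[
\operatorname{tr}X+\operatorname{tr}Y=\sum_{u\in S}(P_{uu}+Q_{uu})-\frac1{B_{vv}}\|c\|^2=\sum_{u\in S}B_{uu}-\frac1{B_{vv}}\sum_{u\in S}B_{uv}^2,
\]
which is exactly the right-hand side of the statement.

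The key step is to show $X,Y\succcurlyeq0$. Here $X$ is the Schur complement of the entry $P_{vv}$ in the principal submatrix of $P$ on $S\cup\{v\}$. Indeed,
\[
\frac{P_{uv}P_{wv}}{P_{vv}}=\frac{\tfrac14B_{uv}B_{wv}}{\tfrac12B_{vv}}=\frac{B_{uv}B_{wv}}{2B_{vv}}.
\]
Since that submatrix is PSD and $P_{vv}>0$, its Schur complement is PSD. Equivalently, in Gram terms, $X$ is the Gram matrix of the projections of the $p_u$ onto the orthogonal complement of $p_v$. The same computation applies to $Q$ and gives $Y\succcurlyeq0$.

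Lemma~\ref{lem-var} applied to $M=A_{SS}$ then yields the inequality. If $S$ is empty, both sides are $0$ and the statement is trivial. The only delicate point is noticing that the equal off-diagonal values $P_{uv}=Q_{uv}$ make the two rank-one corrections identical, so they cancel in $X-Y$ while doubling in the trace. Everything else is routine.
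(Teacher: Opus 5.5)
Your proof is correct and is essentially the paper's argument: the paper takes $X$ and $Y$ to be the Gram matrices of the projections $p_u-\frac{P_{uv}}{\beta}p_v$ and $q_u-\frac{Q_{uv}}{\beta}q_v$ with $\beta=\tfrac12B_{vv}$, which are exactly your matrices $P_{SS}-\frac{1}{2B_{vv}}cc^\top$ and $Q_{SS}-\frac{1}{2B_{vv}}cc^\top$, and then applies the same trace lemma. The only difference is cosmetic: you justify positive semidefiniteness via the Schur complement, while the paper does so by exhibiting the projected Gram vectors explicitly.
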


\begin{proof}
Put $\beta=P_{vv}=Q_{vv}=\tfrac12B_{vv}>0$ (Lemmas~\ref{lem-parts} and~\ref{lem-edge}). For $u\in S(v)$ define the projections
\[
p'_u=p_u-\frac{P_{uv}}{\beta}\,p_v,\qquad q'_u=q_u-\frac{Q_{uv}}{\beta}\,q_v .
\]
Since $\|p_v\|^2=\beta$, for $u,w\in S(v)$ we have $\langle p'_u,p'_w\rangle=P_{uw}-P_{uv}P_{wv}/\beta$, and similarly for $q'$. As $P_{uv}=Q_{uv}$ and $P_{wv}=Q_{wv}$ by Lemma~\ref{lem-parts}(2),
\[
\langle p'_u,p'_w\rangle-\langle q'_u,q'_w\rangle=P_{uw}-Q_{uw}=A_{uw}.
\]
So the Gram matrices $X$ of $(p'_u)$ and $Y$ of $(q'_u)$ are PSD with $X-Y=A(\Gamma-N[v])$. By Lemma~\ref{lem-var},
\[
\En(\Gamma-N[v])\le\sum_{u\in S(v)}\Big(P_{uu}+Q_{uu}-\frac{P_{uv}^2+Q_{uv}^2}{\beta}\Big)
=\sum_{u\in S(v)}\Big(B_{uu}-\frac{B_{uv}^2}{B_{vv}}\Big).
\]
If $S(v)=\emptyset$, both sides are $0$.
\end{proof}


\begin{lemma}\label{lem-edge}
If $uv\in E$, then $B_{uu}B_{vv}\geq(1+|B_{uv}|)^2$. In particular, $B_{vv}>0$ for every
non-isolated vertex $v$.
\end{lemma}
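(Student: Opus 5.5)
The plan is to work with the principal $2\times 2$ submatrices of the two positive semidefinite matrices $2P=B+A$ and $2Q=B-A$ on the pair $\{u,v\}$, where $P$, $Q$, $B$ are as in Lemma~\ref{lem-parts}. By Lemma~\ref{lem-parts}(1) we have $P=\tfrac12(B+A)$ and $Q=\tfrac12(B-A)$, and both are PSD by (F2). Write $x=B_{uu}$, $y=B_{vv}$, $z=B_{uv}$ and $\sgn=A_{uv}=\sigma(uv)\in\{1,-1\}$. The diagonal entries $A_{uu}=A_{vv}=0$, so the two submatrices are
\[
\begin{pmatrix}x&z+\sgn\\ z+\sgn&y\end{pmatrix}
\quad\text{and}\quad
\begin{pmatrix}x&z-\sgn\\ z-\sgn&y\end{pmatrix}.
\]

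Every principal submatrix of a PSD matrix is PSD. Hence both submatrices have nonnegative diagonal and nonnegative determinant. This gives $x,y\ge0$ together with
\[
xy\ge(z+\sgn)^2 \quad\text{and}\quad xy\ge(z-\sgn)^2 .
\]
Equivalently, (F3) and (F4) can be applied directly to the Gram vectors: $2P_{uv}=\langle\sqrt2p_u,\sqrt2p_v\rangle$ is bounded in absolute value by $\sqrt{xy}$, and similarly for $Q$.

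The key step is elementary. Since $|\sgn|=1$, one of the two numbers $z\pm\sgn$ has the same sign as $z$, or either choice works if $z=0$. For that choice, $|z\pm\sgn|=|z|+1$. Taking the better of the two determinant inequalities therefore yields $B_{uu}B_{vv}\ge(1+|B_{uv}|)^2$.

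For the final assertion, let $v$ be a non-isolated vertex and pick a neighbour $u$. The inequality gives $B_{uu}B_{vv}\ge1>0$. Combined with $B_{uu},B_{vv}\ge0$, this forces $B_{vv}>0$.

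I do not expect a genuine obstacle here. The only point needing care is the choice of sign that realises $|z|+1$, and the observation that the diagonal entries of $P$ and $Q$ on $\{u,v\}$ are $x/2$ and $y/2$ because $A$ has zero diagonal.
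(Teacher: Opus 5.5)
Your proposal is correct and follows essentially the same route as the paper. Both arguments take the $2\times2$ principal submatrices of $2P=B+A$ and $2Q=B-A$ on $\{u,v\}$, use their nonnegative determinants, choose the sign for which $|z\pm\sgn|=1+|z|$, and then deduce $B_{vv}>0$ from $B_{uu}B_{vv}\ge1$ together with the nonnegativity of the diagonal entries of $B$.
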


\begin{proof}
Let $x=B_{uu}$, $y=B_{vv}$, $z=B_{uv}$ and $s=A_{uv}\in\{1,-1\}$. By Lemma~\ref{lem-parts}(1), the matrices $2P=B+A$ and $2Q=B-A$ are positive semidefinite. Since $A_{uu}=A_{vv}=0$, their principal submatrices on $\{u,v\}$ are
\[
\begin{pmatrix}x&z+s\\ z+s&y\end{pmatrix}
\qquad\text{and}\qquad
\begin{pmatrix}x&z-s\\ z-s&y\end{pmatrix}.
\]
Both have nonnegative determinant, so $xy\geq(z+s)^2$ and $xy\geq(z-s)^2$. As $s=\pm1$, one of the numbers $z+s$ and $z-s$ has absolute value $1+|z|$. Hence $xy\geq(1+|z|)^2\geq1$. Since $x$ and $y$ are diagonal entries of $B\succcurlyeq0$, they are nonnegative, and therefore $x,y>0$.
\end{proof}


\begin{lemma}\label{lem-local}
Let $v$ be a non-isolated vertex and put $S(v)=V\setminus N[v]$. Then
\[
\mathcal{E}(\Gamma-N[v])\leq\sum_{u\in S(v)}B_{uu}-\frac{1}{B_{vv}}\sum_{u\in S(v)}B_{uv}^2 .
\]
\end{lemma}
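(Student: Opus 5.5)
The plan is to write $A(\Gamma-N[v])$ as a difference of two positive semidefinite matrices whose traces are easy to control, and then apply Lemma~\ref{lem-var}. These matrices will come from the fixed Gram vectors $p_u,q_u$, projected away from the direction of the vertex $v$.

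\emph{Step 1 (the common scale at $v$).} Since $v$ is non-isolated, Lemma~\ref{lem-edge} gives $B_{vv}>0$. Lemma~\ref{lem-parts}(2) gives $\|p_v\|^2=P_{vv}=Q_{vv}=\|q_v\|^2=\beta:=\tfrac12B_{vv}>0$. The key observation is that every $u\in S(v)=V\setminus N[v]$ is distinct from $v$ and non-adjacent to it. So Lemma~\ref{lem-parts}(2) also gives $P_{uv}=Q_{uv}=\tfrac12B_{uv}$. This equality is the only place where the choice of $S(v)$ is used, and it is what makes the construction work.

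\emph{Step 2 (projection).} For $u\in S(v)$, set $p'_u=p_u-(P_{uv}/\beta)p_v$ and $q'_u=q_u-(Q_{uv}/\beta)q_v$. These are the orthogonal projections onto the complements of $p_v$ and $q_v$. A direct computation gives $\langle p'_u,p'_w\rangle=P_{uw}-P_{uv}P_{wv}/\beta$, and the analogous formula holds for $q'$. By Step 1 the correction terms coincide, so the Gram matrices satisfy $X-Y=(P-Q)|_{S(v)}=A|_{S(v)}=A(\Gamma-N[v])$. Both $X$ and $Y$ are PSD by (F3).

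\emph{Step 3 (trace).} Lemma~\ref{lem-var} now yields $\mathcal{E}(\Gamma-N[v])\le\operatorname{tr}X+\operatorname{tr}Y$. This trace equals $\sum_{u\in S(v)}\bigl(P_{uu}+Q_{uu}-(P_{uv}^2+Q_{uv}^2)/\beta\bigr)$. Using $P_{uu}+Q_{uu}=B_{uu}$ and $P_{uv}^2+Q_{uv}^2=\tfrac12B_{uv}^2$, together with $\beta=\tfrac12B_{vv}$, this becomes $\sum_{u\in S(v)}\bigl(B_{uu}-B_{uv}^2/B_{vv}\bigr)$, which is the stated bound. If $S(v)=\emptyset$, both sides are zero.

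I expect no serious obstacle. The only delicate point is making sure the subtracted terms cancel exactly in $X-Y$, which needs $P_{uv}=Q_{uv}$ and $\|p_v\|=\|q_v\|$ simultaneously. Both facts hold precisely because $u\notin N[v]$ and $A_{vv}=0$.
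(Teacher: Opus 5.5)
Your proposal is correct and matches the paper's proof essentially step for step. It uses the same projections $p'_u=p_u-(P_{uv}/\beta)p_v$ and $q'_u=q_u-(Q_{uv}/\beta)q_v$, the same use of $P_{uv}=Q_{uv}=\tfrac12B_{uv}$ for $u\in S(v)$ to obtain $X-Y=A(\Gamma-N[v])$, and the same application of Lemma~\ref{lem-var}, with the identical trace computation.
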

\begin{proof}
If $S(v)=\emptyset$, both sides are $0$. Otherwise, let $\beta=P_{vv}=Q_{vv}=\tfrac12B_{vv}$, which is positive by Lemmas~\ref{lem-parts}(2) and~\ref{lem-edge}. For $u\in S(v)$, let $p'_u$ and $q'_u$ be the components of $p_u$ and $q_u$ orthogonal to $p_v$ and $q_v$, respectively:
\[
p'_u=p_u-\frac{P_{uv}}{\beta}\,p_v,\qquad q'_u=q_u-\frac{Q_{uv}}{\beta}\,q_v .
\]
Since $\|p_v\|^2=\|q_v\|^2=\beta$, for all $u,w\in S(v)$ we get
\[
\langle p'_u,p'_w\rangle=P_{uw}-\frac{P_{uv}P_{wv}}{\beta},\qquad
\langle q'_u,q'_w\rangle=Q_{uw}-\frac{Q_{uv}Q_{wv}}{\beta}.
\]
The vertices $u,w\in S(v)$ are not adjacent to $v$, so $P_{uv}=Q_{uv}$ and $P_{wv}=Q_{wv}$ by Lemma~\ref{lem-parts}(2). Hence $\langle p'_u,p'_w\rangle-\langle q'_u,q'_w\rangle=P_{uw}-Q_{uw}=A_{uw}$. Thus the Gram matrices $X$ of $(p'_u)$ and $Y$ of $(q'_u)$ are positive semidefinite and satisfy $X-Y=A(\Gamma-N[v])$. By Lemma~\ref{lem-var},
\[
\mathcal{E}(\Gamma-N[v])\leq\operatorname{tr}X+\operatorname{tr}Y
=\sum_{u\in S(v)}\Bigl(P_{uu}+Q_{uu}-\frac{P_{uv}^2+Q_{uv}^2}{\beta}\Bigr)
=\sum_{u\in S(v)}\Bigl(B_{uu}-\frac{B_{uv}^2}{B_{vv}}\Bigr),
\]
where the last step uses $P_{uv}=Q_{uv}=\tfrac12B_{uv}$ and $P_{uu}+Q_{uu}=B_{uu}$.
\end{proof}


\begin{lemma}\label{lem-sum}
Let $\Gamma$ be a signed graph with no isolated vertices. For vertices $u,v$ write $x_v=B_{vv}$ and $z_{uv}=B_{uv}$. By Lemma~\ref{lem-edge}, $x_v>0$ for every $v$. For an edge $uv$ put
\[
F_{uv}=\frac{(x_u-1)^2-z_{uv}^2}{x_u}+\frac{(x_v-1)^2-z_{uv}^2}{x_v}.
\]
Then
\[
\sum_{v}\mathcal{E}(\Gamma-N[v])\leq n\,\mathcal{E}(\Gamma)-4m-\sum_{uv\in E}F_{uv}.
\]
\end{lemma}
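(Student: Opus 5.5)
The plan is to sum the bound of Lemma~\ref{lem-local} over all vertices $v$, which is allowed since there are no isolated vertices. We then rewrite the result using two facts from Lemma~\ref{lem-parts}: that $\operatorname{tr}B=\mathcal{E}(\Gamma)$, and the row identity $(B^2)_{vv}=d_v$. The last step regroups everything edge by edge.

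\emph{First term.} Summing $\sum_{u\in S(v)}x_u$ over $v$ counts each $u$ once for every $v$ that is neither $u$ nor a neighbour of $u$, that is, $n-1-d_u$ times. Hence
\[
\sum_v\sum_{u\in S(v)}x_u=n\,\mathcal{E}(\Gamma)-\sum_u x_u-\sum_u d_ux_u,
\]
using $\sum_u x_u=\operatorname{tr}B=\mathcal{E}(\Gamma)$.

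\emph{Second term.} Since $B$ is symmetric, Lemma~\ref{lem-parts}(3) gives $d_v=(B^2)_{vv}=\sum_u z_{uv}^2$. Splitting this sum over $u=v$, $u\in N(v)$ and $u\in S(v)$ yields
\[
\sum_{u\in S(v)}z_{uv}^2=d_v-x_v^2-\sum_{u\sim v}z_{uv}^2 .
\]
Substituting, the subtracted quantity becomes
\[
\sum_v\frac1{x_v}\sum_{u\in S(v)}z_{uv}^2=\sum_v\Bigl(\frac{d_v}{x_v}-x_v-\sum_{u\sim v}\frac{z_{uv}^2}{x_v}\Bigr).
\]

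\emph{Combining.} The terms $\pm\sum_v x_v$ cancel, leaving
\[
\sum_v\mathcal{E}(\Gamma-N[v])\le n\,\mathcal{E}(\Gamma)-\sum_v d_v\Bigl(x_v+\frac1{x_v}\Bigr)+\sum_v\sum_{u\sim v}\frac{z_{uv}^2}{x_v}.
\]
Now we convert the vertex sums into edge sums. Since $\sum_v d_v g(v)=\sum_{uv\in E}(g(u)+g(v))$, each edge $uv$ contributes
\[
x_u+\frac1{x_u}+x_v+\frac1{x_v}-\frac{z_{uv}^2}{x_u}-\frac{z_{uv}^2}{x_v}.
\]
Using the identity $x+\frac1x=2+\frac{(x-1)^2}{x}$ for $x>0$, this contribution equals exactly $4+F_{uv}$. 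Summing over the $m$ edges gives the claimed bound $n\,\mathcal{E}(\Gamma)-4m-\sum_{uv\in E}F_{uv}$.

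There is no real obstacle here; it is bookkeeping. The only points needing care are two. The off-diagonal entries $z_{uv}$ for $u\in S(v)$ must be eliminated via $(B^2)_{vv}=d_v$, not bounded. And $x_v>0$, guaranteed by Lemma~\ref{lem-edge}, is needed for the division.
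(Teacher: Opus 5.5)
Your proposal is correct and follows the paper's proof essentially step for step. You sum the local bound over all $v$, count each $x_u$ exactly $n-1-d_u$ times, eliminate $\sum_{u\in S(v)}z_{uv}^2$ through $(B^2)_{vv}=d_v$, cancel the $\pm x_v$ terms, and regroup edge by edge using $x+\frac1x-2=\frac{(x-1)^2}{x}$, which the paper writes as $h(x,z)-2$ with $h(x,z)=x+\frac1x-\frac{z^2}{x}$.
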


\begin{proof}
Summing Lemma~\ref{lem-local} over all vertices $v$ gives $\sum_v\mathcal{E}(\Gamma-N[v])\leq T_1-T_2$, where
\[
T_1=\sum_{v}\sum_{u\in S(v)}x_u
\qquad\text{and}\qquad
T_2=\sum_{v}\frac{1}{x_v}\sum_{u\in S(v)}z_{uv}^2 .
\]

For a fixed vertex $u$, we have $u\in S(v)$ for exactly $n-1-d_u$ vertices $v$. Since
$\sum_ux_u=\operatorname{tr}B=\mathcal{E}(\Gamma)$, this gives
\[
T_1=\sum_{u}(n-1-d_u)\,x_u=n\,\mathcal{E}(\Gamma)-\sum_{u}(d_u+1)\,x_u .
\]

By Lemma~\ref{lem-parts}(3), splitting the sum according to whether $u=v$, $u\in N(v)$ or
$u\in S(v)$, we get
\[
d_v=\sum_{u}B_{uv}^2=x_v^2+\sum_{u\sim v}z_{uv}^2+\sum_{u\in S(v)}z_{uv}^2 .
\]
Therefore
\[
T_2=\sum_{v}\Bigl(\frac{d_v}{x_v}-x_v-\sum_{u\sim v}\frac{z_{uv}^2}{x_v}\Bigr).
\]

The terms $\pm x_v$ cancel in $T_1-T_2$, and
\[
T_1-T_2=n\,\mathcal{E}(\Gamma)-\sum_{v}\Bigl(d_v\Bigl(x_v+\frac{1}{x_v}\Bigr)
-\sum_{u\sim v}\frac{z_{uv}^2}{x_v}\Bigr).
\]
Each edge $uv$ is counted once at $u$ and once at $v$. Hence, with
$h(x,z)=x+\frac{1}{x}-\frac{z^2}{x}$,
\[
T_1-T_2=n\,\mathcal{E}(\Gamma)-\sum_{uv\in E}\bigl(h(x_u,z_{uv})+h(x_v,z_{uv})\bigr).
\]
Finally, $h(x,z)-2=\frac{(x-1)^2-z^2}{x}$, so the last sum equals $4m+\sum_{uv\in E}F_{uv}$.
\end{proof}


\begin{lemma}\label{lem-F}
For every edge $uv$ we have $F_{uv}\geq0$, with equality if and only if $x_u=x_v=1+|z_{uv}|$.
\end{lemma}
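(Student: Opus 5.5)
The plan is to work directly with Lemma~\ref{lem-edge}, which for an edge $uv$ gives $x_ux_v\geq(1+|z_{uv}|)^2$ with $x_u,x_v>0$. Write $c=1+|z_{uv}|\geq1$, so that $z_{uv}^2=(c-1)^2$. The quantity to control is
\[
F_{uv}=\frac{(x_u-1)^2-(c-1)^2}{x_u}+\frac{(x_v-1)^2-(c-1)^2}{x_v}.
\]
Each summand simplifies: $(x-1)^2-(c-1)^2=(x-c)(x+c-2)$. Hence
\[
\frac{(x-1)^2-(c-1)^2}{x}=x-\frac{2c-c^2}{x}-2+\dots
\]
More cleanly, the summand equals $x+\frac{c^2-2c}{x}+2-2c$.

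So we need to show that
\[
g(x_u,x_v)=x_u+x_v+(c^2-2c)\Bigl(\frac1{x_u}+\frac1{x_v}\Bigr)+4-4c\geq0
\]
under the constraint $x_ux_v\geq c^2$.

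The key step is AM--GM. Set $s=x_u+x_v$ and $p=x_ux_v\geq c^2$. Then $g=s+(c^2-2c)\frac{s}{p}+4-4c$. We split into two cases according to the sign of $c^2-2c$.

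\emph{Case $c\geq2$.} The coefficient $c^2-2c$ is nonnegative, so we may drop the middle term. Since $s\geq2\sqrt p\geq2c$, we get $g\geq 2c+4-4c=4-2c$. This is insufficient when $c>2$, so instead we keep the middle term and use $\frac{s}{p}\geq\frac{2}{\sqrt p}$, which gives
\[
g\geq 2t+\frac{2(c^2-2c)}{t}+4-4c, \qquad t=\sqrt p\geq c.
\]
The function $t\mapsto 2t+2(c^2-2c)/t$ is increasing for $t\geq\sqrt{c^2-2c}$, and $c\geq\sqrt{c^2-2c}$. Its minimum on $t\geq c$ is therefore attained at $t=c$, where it equals $2c+2c-4$. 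Thus $g\geq0$.

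\emph{Case $1\leq c<2$.} The coefficient is negative, so write $g=s\bigl(1-\frac{2c-c^2}{p}\bigr)+4-4c$. Since $p\geq c^2>2c-c^2$, the bracket is positive. We can then lower $s$ to $2\sqrt p$ and set $t=\sqrt p\geq c$. This gives $g\geq 2t-2(2c-c^2)/t+4-4c$, which is increasing in $t$. At $t=c$ it equals $2c-4+2c+4-4c=0$.

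For equality, every inequality used must be tight. This forces $s=2\sqrt p$, i.e.\ $x_u=x_v$, together with $t=c$, i.e.\ $x_ux_v=c^2$. In the first case this uses that the monotonicity is strict; it is strict for $t>\sqrt{c^2-2c}$, which holds at $t\geq c>\sqrt{c^2-2c}$. Hence $x_u=x_v=c=1+|z_{uv}|$. Conversely, these values give $F_{uv}=0$ directly.

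The main obstacle is the case split forced by the sign of $c^2-2c$, and in particular making sure the monotonicity is strict, which is what the equality characterization depends on. Both cases can be unified by minimizing $2t+2(c^2-2c)/t$ over $t\geq c$, which is legitimate since its derivative $2-2(c^2-2c)/t^2$ is positive there.
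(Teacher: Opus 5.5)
Your argument breaks at the simplification of the summand, and everything after it analyses a function that is not $F_{uv}$. Since $(x-1)^2-(c-1)^2=x^2-2x+2c-c^2$, the correct form is
\[
\frac{(x-1)^2-(c-1)^2}{x}=x-2+\frac{2c-c^2}{x},
\]
not $x+\frac{c^2-2c}{x}+2-2c$. For example, $c=1$, $x=2$ gives $\tfrac12$, while your expression gives $\tfrac32$; your intermediate line already has the wrong sign on the fraction. Thus $F_{uv}=s+(2c-c^2)\frac{s}{p}-4$, whereas your $g=s+(c^2-2c)\frac{s}{p}+4-4c$, and
\[
F_{uv}-g=(2-c)\Bigl(\frac{2cs}{p}-4\Bigr),
\]
which has no fixed sign. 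At $c=1$, $x_u=x_v=2$ (admissible, since $p=4\ge c^2$) you get $F_{uv}=1$ but $g=3$. So $g\ge0$ does not give $F_{uv}\ge0$, and the zero set of $g$ says nothing about the equality case. Your case split on the sign of $c^2-2c$ is an artifact of the wrong coefficient; the two expressions are the same function only when $c=2$.

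The strategy survives the correction and becomes simpler. Write $F_{uv}=s\bigl(1-\frac{c^2-2c}{p}\bigr)-4$. Because $p\ge c^2>c^2-2c$, the bracket is positive for every $c\ge1$. So $s\ge2\sqrt p$ yields $F_{uv}\ge k(t):=2t+\frac{2(2c-c^2)}{t}-4$ with $t=\sqrt p\ge c$, and no case distinction is needed. For $t>c$ we have $t^2>c^2\ge 2c-c^2$ (as $c\ge1$), so $k'(t)=2-\frac{2(2c-c^2)}{t^2}>0$. Hence $k$ is strictly increasing on $[c,\infty)$, and $k(c)=0$. Equality forces $x_u=x_v$ and $x_ux_v=c^2$, i.e.\ $x_u=x_v=1+|z_{uv}|$; the converse is a direct check.

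This corrected version is essentially the paper's proof. The paper writes $F_{uv}=\frac{(x+y)(xy+1-z^2)}{xy}-4$ and bounds both factors below by $2\sqrt{xy}$, the second via $|z|\le\sqrt{xy}-1$. That second bound amounts to $(\sqrt p-1)^2\ge(c-1)^2$, which replaces your monotonicity step with a one-line inequality.
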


\begin{proof}
Write $x=x_u$, $y=x_v$ and $z=z_{uv}$. Since $\frac{(x-1)^2-z^2}{x}=x-2+\frac{1-z^2}{x}$, a direct computation gives
\[
F_{uv}=(x+y)+(1-z^2)\Bigl(\frac1x+\frac1y\Bigr)-4=\frac{(x+y)(xy+1-z^2)}{xy}-4 .
\]
By Lemma~\ref{lem-edge}, $xy\geq(1+|z|)^2$, that is, $|z|\leq\sqrt{xy}-1$. Hence
\[
xy+1-z^2\geq xy+1-\bigl(\sqrt{xy}-1\bigr)^2=2\sqrt{xy}>0 .
\]
Together with $x+y\geq2\sqrt{xy}$, this gives $(x+y)(xy+1-z^2)\geq4xy$, and so $F_{uv}\geq0$. Since both factors are positive, equality holds if and only if $x+y=2\sqrt{xy}$ and $|z|=\sqrt{xy}-1$. This means $x=y$ and $|z|=x-1$, that is, $x=y=1+|z|$.
\end{proof}

We can now prove the inequality in Theorem~\ref{thm-main}.

\begin{theorem}\label{thm-ineq}
Let $\Gamma$ be a signed graph with vertex cover number $\tau(\Gamma)$. Then $\mathcal{E}(\Gamma)\geq2\tau(\Gamma)$.
\end{theorem}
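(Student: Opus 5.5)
We argue by strong induction on the number $n$ of vertices, following the approach of Kumar and Pragada. First we remove isolated vertices. Deleting an isolated vertex changes neither $\mathcal{E}(\Gamma)$ nor $\tau(G)$, since its spectrum contribution is a zero eigenvalue and it lies in no edge. So we may assume $\Gamma$ has no isolated vertices. The case $m=0$ is then the empty graph, where both sides are $0$. The induction hypothesis, applied to each $\Gamma-N[v]$ (which has fewer vertices because $v$ is non-isolated), gives
\[
\mathcal{E}(\Gamma-N[v])\geq 2\tau(G-N[v])\qquad\text{for every vertex } v.
\]

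The combinatorial input is a lower bound for $\tau(G-N[v])$ in terms of $\tau(G)$. If $I$ is an independent set of $G-N[v]$, then $I\cup\{v\}$ is independent in $G$, so $\alpha(G-N[v])\leq\alpha(G)-1$. Since $G-N[v]$ has $n-1-d_v$ vertices,
\[
\tau(G-N[v])=(n-1-d_v)-\alpha(G-N[v])\geq n-d_v-\alpha(G)=\tau(G)-d_v .
\]
Summing over all $v$ and using $\sum_v d_v=2m$ yields
\[
\sum_v\mathcal{E}(\Gamma-N[v])\geq 2n\,\tau(G)-4m .
\]

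On the other side, Lemma~\ref{lem-sum} together with Lemma~\ref{lem-F} ($F_{uv}\geq 0$) gives
\[
\sum_v\mathcal{E}(\Gamma-N[v])\leq n\,\mathcal{E}(\Gamma)-4m-\sum_{uv\in E}F_{uv}\leq n\,\mathcal{E}(\Gamma)-4m .
\]
Comparing the two displays gives $n\,\mathcal{E}(\Gamma)\geq 2n\,\tau(G)$. Dividing by $n>0$ yields the claim.

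All the analytic work is already packaged in Lemmas~\ref{lem-local}, \ref{lem-sum} and \ref{lem-F}, so no step should be a serious obstacle. The one point needing care is the bookkeeping. The lower bound $\tau(G-N[v])\geq\tau(G)-d_v$ must be summed so that the $-4m$ terms match exactly, and the induction must be applied to graphs that may themselves have isolated vertices. The preliminary reduction covers the latter, since the statement for general signed graphs is invariant under adding or removing isolated vertices. For the later equality analysis, we also record where equality must hold: every $F_{uv}=0$, every local inequality in Lemma~\ref{lem-local} is tight, and every $\Gamma-N[v]$ is itself extremal with $\tau(G-N[v])=\tau(G)-d_v$.
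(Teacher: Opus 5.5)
Your proposal is correct and is essentially the paper's proof. It follows the same steps: induction on $n$, removal of isolated vertices, the bound $\tau(G-N[v])\geq\tau(G)-d_v$ summed to give $\sum_v\mathcal{E}(\Gamma-N[v])\geq 2n\tau(G)-4m$, and the matching upper bound $n\,\mathcal{E}(\Gamma)-4m$ from Lemmas~\ref{lem-sum} and~\ref{lem-F}. Your closing remarks about which inequalities are tight at equality are also correct, and the paper uses the first of them ($\sum_{uv\in E}F_{uv}=0$) in Proposition~\ref{prop-rigid}.
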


\begin{proof}
We use induction on the number $n$ of vertices. For $n=0$ both sides are $0$. Let $n\geq1$, and write $\alpha=\alpha(\Gamma)$ and $\tau=\tau(\Gamma)$.\\[-2mm]

\noindent\textbf{Case 1:} $\Gamma$ has an isolated vertex $v$.\\[1mm]
Then $A(\Gamma)=A(\Gamma-v)\oplus(0)$, so
$\mathcal{E}(\Gamma)=\mathcal{E}(\Gamma-v)$. Moreover $\alpha=\alpha(\Gamma-v)+1$, and hence $\tau=\tau(\Gamma-v)$. The claim follows from the induction hypothesis applied to $\Gamma-v$.\\[-2mm]

\noindent\textbf{Case 2:} $\Gamma$ has no isolated vertex.\\[1mm] Let $v$ be a vertex. Adding $v$ to an independent set of $\Gamma-N[v]$ gives an independent set of $\Gamma$, so $\alpha(\Gamma-N[v])\leq\alpha-1$. Since $\Gamma-N[v]$ has $n-1-d_v$ vertices,
\[
\tau(\Gamma-N[v])=(n-1-d_v)-\alpha(\Gamma-N[v])\geq\tau-d_v .
\]
By the induction hypothesis, $\mathcal{E}(\Gamma-N[v])\geq2\tau-2d_v$. Summing over all vertices $v$ and using $\sum_vd_v=2m$, we obtain
\begin{equation}\label{eq-sumlower}
\sum_{v}\mathcal{E}(\Gamma-N[v])\geq2n\tau-4m .
\end{equation}
On the other hand, Lemmas~\ref{lem-sum} and~\ref{lem-F} give $\sum_v\mathcal{E}(\Gamma-N[v])\leq n\,\mathcal{E}(\Gamma)-4m$. Combining the two inequalities yields $n\,\mathcal{E}(\Gamma)\geq2n\tau$, and the result follows since $n\geq1$.
\end{proof}

\begin{corollary}\label{cor-comp}
Let $\Gamma_1,\ldots,\Gamma_k$ be the connected components of $\Gamma$. Then $\mathcal{E}(\Gamma)=2\tau(\Gamma)$ if and only if $\mathcal{E}(\Gamma_i)=2\tau(\Gamma_i)$ for every $i$.
\end{corollary}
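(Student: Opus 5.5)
The corollary follows from two additivity facts and Theorem~\ref{thm-ineq} applied to each component. First, energy is additive over components. After ordering the vertices component by component, $A(\Gamma)=A(\Gamma_1)\oplus\cdots\oplus A(\Gamma_k)$ is block diagonal. So the spectrum of $A(\Gamma)$ is the multiset union of the spectra of the $A(\Gamma_i)$, and hence $\mathcal{E}(\Gamma)=\sum_i\mathcal{E}(\Gamma_i)$.

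Second, the vertex cover number is additive over components. A set $S\subseteq V(G)$ is a vertex cover of $G$ if and only if $S\cap V(\Gamma_i)$ is a vertex cover of $\Gamma_i$ for every $i$, because every edge lies inside a single component. Minimizing $|S|=\sum_i|S\cap V(\Gamma_i)|$ can therefore be done independently on each component, which gives $\tau(\Gamma)=\sum_i\tau(\Gamma_i)$.

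Combining the two facts,
\[
\mathcal{E}(\Gamma)-2\tau(\Gamma)=\sum_{i=1}^{k}\bigl(\mathcal{E}(\Gamma_i)-2\tau(\Gamma_i)\bigr).
\]
By Theorem~\ref{thm-ineq} applied to each $\Gamma_i$, every summand on the right is nonnegative. A sum of nonnegative numbers is zero exactly when each term is zero, so $\mathcal{E}(\Gamma)=2\tau(\Gamma)$ holds if and only if $\mathcal{E}(\Gamma_i)=2\tau(\Gamma_i)$ for all $i$.

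There is no real obstacle here; the only point needing care is the additivity of $\tau$, which is immediate from the component-wise description of vertex covers.
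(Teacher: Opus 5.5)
Your proof is correct and follows essentially the same argument as the paper: block-diagonal additivity of the energy, additivity of $\tau$, and nonnegativity of each summand by Theorem~\ref{thm-ineq}. The only cosmetic difference is that the paper derives additivity of $\tau$ from additivity of $\alpha$ together with $\tau=n-\alpha$, whereas you argue it directly from the component-wise description of vertex covers.
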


\begin{proof}
Since $A(\Gamma)$ is block diagonal, $\mathcal{E}(\Gamma)=\sum_i\mathcal{E}(\Gamma_i)$. An
independent set of $\Gamma$ is a union of independent sets of the components, so
$\alpha(\Gamma)=\sum_i\alpha(\Gamma_i)$ and $\tau(\Gamma)=\sum_i\tau(\Gamma_i)$. Hence
$\mathcal{E}(\Gamma)-2\tau(\Gamma)=\sum_i\bigl(\mathcal{E}(\Gamma_i)-2\tau(\Gamma_i)\bigr)$. By Theorem~\ref{thm-ineq} every term of this sum is nonnegative. So the sum is zero if and only if every term is zero.
\end{proof}

\section{Characterization of the equality case}\label{sec-equality}
In this section we first show that every additive rook graph attains equality in \eqref{eq-bound-signed}. We then prove the converse: if a signed graph attains equality, then each of its connected components is switching equivalent to an additive rook graph.

\subsection{Additive rook graphs attain equality}

\begin{theorem}\label{thm-suff}
If $\Gamma$ is switching equivalent to $\Rook_N(a,b)$, then $\mathcal{E}(\Gamma)=2\tau(\Gamma)$.
\end{theorem}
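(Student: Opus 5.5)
The plan is to sandwich $\mathcal{E}(\Gamma)$. Theorem~\ref{thm-ineq} already gives $\mathcal{E}(\Gamma)\geq 2\tau(\Gamma)$, so only the reverse inequality is needed. Switching preserves both the spectrum and the underlying graph, so it suffices to treat $\Gamma=\Rook_N(a,b)$ itself. By Corollary~\ref{cor-rookinv} it has $n=N\ell$ vertices and $\tau=(N-1)\ell$. The goal is therefore to show
\[
\mathcal{E}(\Rook_N(a,b))\leq 2(N-1)\ell .
\]
I will get this from Lemma~\ref{lem-var} by choosing a good decomposition $A=X-Y$ with $X,Y\succcurlyeq 0$.

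By Lemma~\ref{lem-grid}(i) we have $A=RR^\top-CC^\top$. This choice alone gives only $\mathcal{E}\leq \operatorname{tr}RR^\top+\operatorname{tr}CC^\top=2n$, which is too weak. The fix is to subtract a common multiple of $J$ from both parts:
\[
X=RR^\top-\tfrac1N J,\qquad Y=CC^\top-\tfrac1N J,
\]
so that $X-Y=A$ still holds. Each part is positive semidefinite for the following reason. Since $R\mathbf{1}_N=\mathbf{1}_n$ (Lemma~\ref{lem-grid}(i)), we have $J=\mathbf{1}_n\mathbf{1}_n^\top=R\,\mathbf{1}_N\mathbf{1}_N^\top R^\top$. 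Hence
\[
X=R\bigl(I_N-\tfrac1N\mathbf{1}_N\mathbf{1}_N^\top\bigr)R^\top .
\]
The middle matrix is the orthogonal projection onto $\mathbf{1}_N^\perp$, so it is PSD, and therefore $X\succcurlyeq0$. The same argument with $C\mathbf{1}_N=\mathbf{1}_n$ gives $Y\succcurlyeq0$. The board being square ($s=t=N$) is what makes the same scalar $1/N$ work for both parts.

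The traces are as follows. The diagonal entries of $RR^\top$ and $CC^\top$ are all $1$, so
\[
\operatorname{tr}X=\operatorname{tr}Y=n-\tfrac nN=n-\ell .
\]
Lemma~\ref{lem-var} then yields $\mathcal{E}(\Gamma)\leq 2n-2\ell=2(N-1)\ell=2\tau$. Combined with Theorem~\ref{thm-ineq}, this gives equality.

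The only genuinely nontrivial step is finding the correction term $\tfrac1N J$. One way to see it is forced: a PSD matrix $M$ with $M\preccurlyeq RR^\top$ and $M\preccurlyeq CC^\top$ must have range inside both the row-constant and the column-constant vectors, which pushes $M$ towards multiples of $J$. The trace requirement $\operatorname{tr}M=\ell$ then fixes the coefficient $\ell/n=1/N$. Once that choice is made, the rest is routine verification.
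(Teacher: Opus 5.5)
Your proposal is correct and is essentially the paper's proof: the same decomposition $X=RR^{\top}-\tfrac1N J$, $Y=CC^{\top}-\tfrac1N J$, the same trace count $n-\ell$, then Lemma~\ref{lem-var} for the upper bound and Theorem~\ref{thm-ineq} for the lower bound. The only difference is cosmetic. You prove $X\succcurlyeq0$ by writing $X=R\bigl(I_N-\tfrac1N\mathbf{1}_N\mathbf{1}_N^{\top}\bigr)R^{\top}$, while the paper applies Cauchy--Schwarz to $y=R^{\top}x$; these are the same computation.
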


\begin{proof}
Switching changes neither the spectrum nor the underlying graph, so it changes neither
$\mathcal{E}(\Gamma)$ nor $\tau(\Gamma)$. Hence we may assume that $\Gamma=\Rook_N(a,b)$. By Corollary~\ref{cor-rookinv}, $n=N\ell$ and $\alpha(\Gamma)=\ell=n/N$. Put
\[
P'=RR^{\top}-\tfrac1NJ\qquad\text{and}\qquad Q'=CC^{\top}-\tfrac1NJ .
\]
For $x\in\mathbb{R}^{n}$ let $y=R^{\top}x\in\mathbb{R}^{N}$. Since $R\mathbf{1}_N=\mathbf{1}_{n}$, we have $\mathbf{1}_{n}^{\top}x=\mathbf{1}_N^{\top}y$. The Cauchy--Schwarz inequality $(\mathbf{1}_N^{\top}y)^2\leq N\|y\|^2$ then gives
\[
x^{\top}P'x=\|y\|^2-\tfrac1N\bigl(\mathbf{1}_N^{\top}y\bigr)^2\geq0 .
\]
Thus $P'\succcurlyeq0$, and the same argument with $C$ in place of $R$ shows $Q'\succcurlyeq0$. By Lemma~\ref{lem-grid}(i), $P'-Q'=RR^{\top}-CC^{\top}=A(\Gamma)$, and since the diagonal entries of $RR^{\top}$ and $CC^{\top}$ are equal to $1$,
\[
\operatorname{tr}P'=\operatorname{tr}Q'=n-\frac nN=n-\alpha(\Gamma)=\tau(\Gamma).
\]
Lemma~\ref{lem-var} now gives $\mathcal{E}(\Gamma)\leq\operatorname{tr}P'+\operatorname{tr}Q'=2\tau(\Gamma)$, and Theorem~\ref{thm-ineq} gives the reverse inequality.
\end{proof}

\subsection{Equality forces additive rook graphs}

We now prove the converse of Theorem~\ref{thm-suff}, which is the main part of the article based on several novel ideas.

\begin{theorem}\label{thm-nec}
Let $\Gamma$ be a connected signed graph with $n\geq2$ vertices. If
$\mathcal{E}(\Gamma)=2\tau(\Gamma)$, then $\Gamma$ is switching equivalent to an additive rook graph $\Rook_N(a,b)$ for some $N\geq2$ and some $a,b\in\mathbb{Z}_{\geq0}^{N}$.
\end{theorem}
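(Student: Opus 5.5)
The plan is to push the inequality chain in the proof of Theorem~\ref{thm-ineq} until every step is tight, and then read off the structure of $\Gamma$ from the Gram vectors $p_v,q_v$.

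\textbf{Step 1: tightness of the inequality chain.} Since $\Gamma$ is connected with $n\ge2$, it has no isolated vertex, so we are in Case~2. Equality $\mathcal{E}(\Gamma)=2\tau$ forces the following.
\begin{enumerate}[label=(\alph*)]
\item $F_{uv}=0$ for every edge $uv$. By Lemma~\ref{lem-F} this gives $x_u=x_v=1+|z_{uv}|$. By connectivity, $x_v=\beta_0$ is constant on $V$, and $|B_{uv}|=\beta_0-1$ on every edge.
\item Every local bound in Lemma~\ref{lem-local} is an equality.
\item For every $v$ we have $\mathcal{E}(\Gamma-N[v])=2\tau(\Gamma-N[v])$ and $\alpha(\Gamma-N[v])=\alpha-1$.
\end{enumerate}

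\textbf{Step 2: the edge Gram vectors.} On an edge $uv$ with $s=A_{uv}$, the $2\times2$ determinant argument of Lemma~\ref{lem-edge} is tight for whichever of $z\pm s$ has absolute value $1+|z|$. By (F4), $\|p_u\|=\|p_v\|$, and then either $p_u=\pm p_v$ or $q_u=\pm q_v$. Also $\operatorname{tr}B=n\beta_0=2\tau$, so $\beta_0=2\tau/n$.

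This suggests a dichotomy for each edge. Either $p_u=p_v$, the two ends share a ``row'' in the $P$-geometry, and the edge should be positive. Or $q_u=q_v$ (after switching), the two ends share a ``column'', and the edge should be negative. I would then define an equivalence relation: $u\approx_R v$ iff $p_u=\pm p_v$, and $u\approx_C v$ iff $q_u=\pm q_v$. Choose a switching $D$ that makes the signs consistent, using Lemma~\ref{lem-parts}(4) to transport the Gram vectors. The classes of $\approx_R$ give rows, the classes of $\approx_C$ give columns, and the aim is to show $A=RR^\top-CC^\top$. This would make $\Gamma$ a signed grid graph $\Gamma[W]$ via Definition~\ref{def-grid}. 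One must check that any two vertices sharing a row but not a column are adjacent, and similarly for columns. Non-adjacent pairs must share both or neither. I would attack this using the equalities in (b): tightness of Lemma~\ref{lem-var} for the projected families forces the projected Gram matrices $X,Y$ to be exactly $(A')_\pm$ of $\Gamma-N[v]$. I would combine this with induction via (c), since each $\Gamma-N[v]$ is, componentwise, switching equivalent to an additive rook graph.

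\textbf{Step 3: squareness and additivity.} Once $\Gamma\sim\Gamma[W]$ for an $s\times t$ matrix $W$, with $s\le t$ after possibly using Lemma~\ref{lem-grid}(iii) and negation, I compute the energy lower bound from the Gram structure. We have $P\succcurlyeq$ (something like) $RR^\top-\frac1tJ$, and $\tau=n-\alpha\le n-n/t$ by Lemma~\ref{lem-rook}(b). Combining this with $\beta_0=2\tau/n$ should force $\alpha=n/t$ and $s=t$, or else $s<t$ with $w_{ij}=a_i$. The second option is a degenerate rook graph obtained by padding with zero rows (Lemma~\ref{lem-grid}(iv)). Lemma~\ref{lem-rook}(c),(d) then gives $w_{ij}=a_i+b_j$, and Remark~\ref{rem-nonneg} yields $\Rook_N(a,b)$. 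Connectivity together with Lemma~\ref{lem-grid}(v) ensures $N\ge2$.

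\textbf{Main obstacle.} The hard part is Step~2: proving that the local equalities force the global grid structure. In particular, one must show that the relation ``$p_u=\pm p_v$'' is transitive with consistent signs, so that a single switching works, and that every non-edge shares both a row and a column or neither. I would try first to show directly that $P=\frac{\beta_0}{?}(RR^\top)$-type rank structure holds. Concretely, I would show that the vectors $p_v$ take only finitely many values up to sign, and that these values are pairwise at a fixed inner product. The tool would be the tight local inequalities at every vertex, with induction via (c) as the fallback.
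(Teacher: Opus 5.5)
Your skeleton (tightness of the chain, rows as $\approx_P$-classes and columns as $\approx_Q$-classes, $\Gamma\sim\Gamma[W]$, then Lemma~\ref{lem-rook}) is the paper's. However, the part you label the main obstacle is left open. The tools you propose for it (tightness of Lemma~\ref{lem-var} on the projected families, induction on the $\Gamma-N[v]$) are neither needed nor shown to work.

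The grid structure of the underlying graph is in fact immediate. Put $d=\tau/n$ and $e=\alpha/n$. Then:
\begin{itemize}
\item $\|p_v\|^2=\|q_v\|^2=d$ for all $v$;
\item on an edge, $|B_{uv}|=d-e$ gives $(P_{uv},Q_{uv})\in\{(A_{uv}d,-A_{uv}e),(A_{uv}e,-A_{uv}d)\}$;
\item on a non-edge, $P_{uv}=Q_{uv}$ by Lemma~\ref{lem-parts}(2).
\end{itemize}
By (F4), a non-adjacent pair satisfies both relations or neither, and an adjacent pair satisfies exactly one, \emph{provided $d\neq e$}.

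You overlook that $d=e$ (i.e.\ $\alpha=n/2$, $B_{uv}=0$ on edges) breaks your construction. Both relations then hold on every edge, connectivity gives a single row and a single column, and a $1\times1$ board has no edges. This case must be handled on its own; it gives $K_{a,a}$.

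The genuinely missing idea is the sign consistency you dispose of with ``choose a switching $D$ that makes the signs consistent''. After switching so that $p_v=\hat p_{r(v)}$, you only get $q_v=\theta_v\hat q_{c(v)}$ and $A=RR^\top-\Theta CC^\top\Theta$ with $\Theta=\operatorname{diag}(\theta_v)$. A further switching $\operatorname{diag}(s_v)$, with $s$ constant on rows and $s_v\theta_v$ constant on columns, exists only if $\theta$, read as a signature on the cell graph $\mathcal{B}(W)$, is balanced. This does not follow from the combinatorics: on a $2\times2$ board with one vertex per square, the wrong $\theta$ gives the unbalanced $4$-cycle, whose energy is $4\sqrt2>4=2\tau$.

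The paper gets consistency from the Gram data via the defect matrix $X=RR^\top-P=\Theta CC^\top\Theta-Q$:
\begin{itemize}
\item every column of $X$ lies in the space $U$ of vectors $y$ with $y$ constant on rows and $\theta_uy_u$ constant on columns;
\item $\dim U\le1$ because $\mathcal{B}(W)$ is connected;
\item $X_{vv}=1-d=e\neq0$.
\end{itemize}
Hence $X=e\,ss^\top$ with $s_v=\pm1$. Switching by $s$ yields $A=RR^\top-CC^\top$, $P=RR^\top-eJ$ and $Q=CC^\top-eJ$. So the target structure is $RR^\top$ minus a rank-one term, not a multiple of $RR^\top$.

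This exact form is also what your Step~3 is missing. There you need $\alpha\le n/t$, the reverse of Lemma~\ref{lem-rook}(b). Your guessed $P\succcurlyeq RR^\top-\frac1tJ$ would give it by taking traces, since $\operatorname{tr}P=\tau$, but you have no way to prove it without first knowing $P$. In the paper one tests $Q=CC^\top-eJ\succcurlyeq0$ on the sum of indicator vectors of one vertex per column. This gives $t-et^2\ge0$, i.e.\ $t\le n/\alpha$. Only then do Lemma~\ref{lem-rook}(c),(d) and Remark~\ref{rem-nonneg} finish the argument.
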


Throughout this subsection, $\Gamma$ is a connected signed graph with $n\geq2$ vertices and $\mathcal{E}(\Gamma)=2\tau(\Gamma)$, and $A$, $P$, $Q$, $B$, $p_v$, $q_v$ are as in
Lemma~\ref{lem-parts}. We write $\alpha=\alpha(\Gamma)$ and $\tau=\tau(\Gamma)$. Since $\Gamma$ has an edge, $\alpha\geq1$ and $\tau\geq1$. Put
\[
d:=\frac{\tau}{n}\qquad\text{and}\qquad e:=\frac{\alpha}{n}.
\]
Then $d+e=1$ and $d,e\in(0,1)$. Switching does not change connectivity, $n$, $\tau$ or the
energy. Hence everything below also holds for every signed graph switching equivalent to $\Gamma$.\\[-1mm]

For an extremal signed graph $\Gamma$ with $n$ vertices, the independent number $\alpha \leq \frac{n}{2}$ is obtained in the next lemma.

\begin{proposition}\label{prop-rigid}
The following hold.
\begin{enumerate}[label=(\alph*)]
  \item $B_{vv}=2d$ for every vertex $v$.
  \item $|B_{uv}|=d-e$ for every edge $uv$.
  \item $d\geq e$, that is, $\alpha\leq n/2$.
\end{enumerate}
\end{proposition}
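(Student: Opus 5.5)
The plan is to run the proof of Theorem~\ref{thm-ineq} (Case~2) for the extremal graph $\Gamma$ and read off the consequences of equality. Since $\Gamma$ is connected with $n\geq2$, it has no isolated vertices, so Case~2 applies. The chain
\[
2n\tau-4m\leq\sum_v\mathcal{E}(\Gamma-N[v])\leq n\,\mathcal{E}(\Gamma)-4m-\sum_{uv\in E}F_{uv}
\]
from \eqref{eq-sumlower} and Lemma~\ref{lem-sum} becomes, after substituting $\mathcal{E}(\Gamma)=2\tau$, the statement $\sum_{uv\in E}F_{uv}\leq0$. By Lemma~\ref{lem-F} every $F_{uv}$ is nonnegative, hence every $F_{uv}$ vanishes. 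Lemma~\ref{lem-F} then gives $x_u=x_v=1+|z_{uv}|$ for every edge $uv$.

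Next I use connectivity. The quantity $B_{vv}$ is equal at the two ends of every edge, so it is a constant $x$ on all of $V$. Then $2\tau=\mathcal{E}(\Gamma)=\operatorname{tr}B=nx$, so $x=2\tau/n=2d$, which proves (a). For (b), each edge satisfies $|B_{uv}|=x-1=2d-1=2d-(d+e)=d-e$, using $d+e=1$.

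For (c), the absolute value in (b) must be nonnegative, so $d-e=|B_{uv}|\geq0$ for any edge; an edge exists since $\Gamma$ is connected with $n\geq2$. Equivalently, $x=1+|z_{uv}|\geq1$ gives $2d\geq1$. Either way $d\geq e$, that is, $\alpha\leq n/2$.

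The only step requiring care is checking that equality really propagates through every inequality. Equality is needed both in \eqref{eq-sumlower} and in the summed Lemma~\ref{lem-local}, but for the present statement only the $F_{uv}$ terms matter. The sandwich $2n\tau-4m\leq n\cdot2\tau-4m-\sum F_{uv}$ forces $\sum F_{uv}\leq0$ directly, so no further analysis of the intermediate inequalities is needed here. I expect no real obstacle; the argument is bookkeeping on top of Lemmas~\ref{lem-sum} and~\ref{lem-F}.
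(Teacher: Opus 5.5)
Your proposal is correct and matches the paper's proof essentially step for step. You sandwich $\sum_{v}\mathcal{E}(\Gamma-N[v])$ between \eqref{eq-sumlower} and Lemma~\ref{lem-sum} to force $\sum_{uv\in E}F_{uv}=0$. Lemma~\ref{lem-F} and connectivity then give a constant diagonal $B_{vv}=2\tau/n=2d$, and (b) and (c) follow from $|B_{uv}|=2d-1=d-e\geq 0$.
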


\begin{proof}
Since $\Gamma$ is connected and $n\geq2$, it has no isolated vertices. By \eqref{eq-sumlower}, Lemma~\ref{lem-sum} and $\mathcal{E}(\Gamma)=2\tau$,
\[
2n\tau-4m\leq\sum_{v}\mathcal{E}(\Gamma-N[v])\leq n\,\mathcal{E}(\Gamma)-4m-\sum_{uv\in E}F_{uv}=2n\tau-4m-\sum_{uv\in E}F_{uv}.
\]
Thus $\sum_{uv\in E}F_{uv}\leq0$. By Lemma~\ref{lem-F}, every $F_{uv}$ is zero, and therefore $x_u=x_v=1+|z_{uv}|$ for every edge $uv$. Since $\Gamma$ is connected, all $x_v$ are equal to a common value $x$. Then $nx=\operatorname{tr}B=\mathcal{E}(\Gamma)=2\tau$, so $x=2d$. This proves (a). For every edge, $|z_{uv}|=x-1=2d-(d+e)=d-e$, which proves (b). Finally, (c) follows from $|z_{uv}|\geq0$.
\end{proof}


\begin{lemma}\label{lem-values} The following hold.
\begin{enumerate}[label=(\arabic*)]
  \item $\|p_v\|^2=\|q_v\|^2=d$ for every vertex $v$.
  \item For every edge $uv$, the pair $(P_{uv},Q_{uv})$ equals either
  $(A_{uv}d,\,-A_{uv}e)$ or $(A_{uv}e,\,-A_{uv}d)$.
  \item For distinct non-adjacent vertices $u,v$, $P_{uv}=Q_{uv}$.
  \item $|P_{uv}|=d$ if and only if $p_u=\pm p_v$, and $|Q_{uv}|=d$ if and only if
  $q_u=\pm q_v$.
\end{enumerate}
\end{lemma}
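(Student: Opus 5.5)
The four parts follow directly from Proposition~\ref{prop-rigid} and Lemma~\ref{lem-parts}. Throughout, we use $P=\tfrac12(B+A)$ and $Q=\tfrac12(B-A)$ together with $d+e=1$.

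For (1), Lemma~\ref{lem-parts}(2) gives $\|p_v\|^2=P_{vv}=\tfrac12B_{vv}$. By Proposition~\ref{prop-rigid}(a) this equals $d$, and the same computation gives $\|q_v\|^2=d$.

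For (2), fix an edge $uv$ and write $s=A_{uv}\in\{1,-1\}$ and $z=B_{uv}$. By Proposition~\ref{prop-rigid}(b), $z=\epsilon(d-e)$ for some $\epsilon\in\{1,-1\}$. We compute
\[
P_{uv}=\tfrac12(z+s),\qquad Q_{uv}=\tfrac12(z-s).
\]
If $\epsilon=s$, then $z=s(d-e)$. Hence $P_{uv}=\tfrac{s}{2}(d-e+d+e)=sd$ and $Q_{uv}=\tfrac{s}{2}(d-e-d-e)=-se$. If $\epsilon=-s$, the same arithmetic gives $P_{uv}=se$ and $Q_{uv}=-sd$. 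These are exactly the two alternatives in (2). If $d=e$ the two cases overlap, which causes no difficulty.

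For (3), this is precisely Lemma~\ref{lem-parts}(2) for distinct non-adjacent vertices.

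For (4), apply (F4) to $x=p_u$ and $y=p_v$, which both have norm $r=\sqrt d>0$ by (1). We have $|P_{uv}|=|\langle p_u,p_v\rangle|\le d$. Equality holds iff $p_u$ and $p_v$ are linearly dependent, which for vectors of equal nonzero length means $p_u=\pm p_v$. The converse is immediate, since $p_u=\pm p_v$ gives $P_{uv}=\pm d$. The same argument applies verbatim to the $q$-vectors.

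None of these steps is a real obstacle. The only point needing care is the sign bookkeeping in (2), where the possible signs of $B_{uv}$ must be matched correctly to the two alternatives.
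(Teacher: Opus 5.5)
Your proposal is correct and follows essentially the same route as the paper: part (1) from Lemma~\ref{lem-parts}(2) together with Proposition~\ref{prop-rigid}(a), part (2) by splitting on the sign of $B_{uv}=\pm A_{uv}(d-e)$ and using $P=\tfrac12(B+A)$, $Q=\tfrac12(B-A)$, $d+e=1$, part (3) from Lemma~\ref{lem-parts}(2), and part (4) from (F4) with $r=\sqrt d$. Your sign bookkeeping in (2) matches the paper's computation exactly.
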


\begin{proof}
(1) follows from Lemma~\ref{lem-parts}(2) and Proposition~\ref{prop-rigid}(a).

\noindent(2) Let $s=A_{uv}\in\{1,-1\}$. By Proposition~\ref{prop-rigid}(b), $B_{uv}=\pm s(d-e)$. We use
$P=\tfrac12(B+A)$, $Q=\tfrac12(B-A)$ and $1=d+e$. If $B_{uv}=s(d-e)$, then $P_{uv}=\tfrac s2(d-e+d+e)=sd$ and $Q_{uv}=\tfrac s2(d-e-d-e)=-se$. If $B_{uv}=-s(d-e)$, the same
computation gives $P_{uv}=se$ and $Q_{uv}=-sd$.\\[-1mm]

\noindent(3) This is Lemma~\ref{lem-parts}(2).\\[-1mm]

\noindent(4) This follows from (1) and (F4) with $r=\sqrt d$.
\end{proof}


\begin{definition}\label{def-classes}
Let $\Gamma$ be a signed graph with vertex set $V$, and let $P=A(\Gamma)_{+}$ and $Q=A(\Gamma)_{-}$ have the Gram representations $P_{uv}=\langle p_u,p_v\rangle$ and $Q_{uv}=\langle q_u,q_v\rangle$. We define two relations on $V$ by
\[
u\approx_Pv\iff p_u=\pm p_v
\qquad\text{and}\qquad
u\approx_Qv\iff q_u=\pm q_v .
\]
Both are equivalence relations. Their equivalence classes are called \emph{$\approx_P$-classes} and \emph{$\approx_Q$-classes}, respectively. Each of the two families of classes partitions $V$.
\end{definition}


By Proposition~\ref{prop-rigid}(c), every extremal signed graph satisfies $\alpha\leq n/2$. We first treat the boundary case $\alpha=n/2$.

\begin{proposition}\label{prop-boundary}
If $d=e$, that is, $\alpha=n/2$, then $n=2a$ for some $a\geq1$, and $\Gamma$ is switching equivalent to $K_{a,a}=\Rook_2\bigl((a,0),(0,0)\bigr)$.
\end{proposition}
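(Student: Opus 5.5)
When $d=e=\tfrac12$ the relations of Proposition~\ref{prop-rigid} and Lemma~\ref{lem-values} become very rigid, and I will exploit this. By Proposition~\ref{prop-rigid}(b), $B_{uv}=0$ for every edge $uv$. By Lemma~\ref{lem-values}(2), every edge satisfies $P_{uv}=A_{uv}/2$ and $Q_{uv}=-A_{uv}/2$. Since $\|p_u\|^2=\|q_u\|^2=\tfrac12$, Lemma~\ref{lem-values}(4) gives $p_u=A_{uv}p_v$ and $q_u=-A_{uv}q_v$ for every edge $uv$. Because $\Gamma$ is connected, propagating along paths shows that all $p_v$ equal $\pm p_{v_0}$ and all $q_v$ equal $\pm q_{v_0}$ for a fixed vertex $v_0$.

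Next I normalize by switching. Choose $\delta_v\in\{\pm1\}$ with $p_v=\delta_v p_{v_0}$. By Lemma~\ref{lem-parts}(4), after switching by $D=\operatorname{diag}(\delta_v)$ we may assume $p_v=p$ for all $v$. Then $P=\tfrac12 J$, and every edge is positive because $p_u=A_{uv}p_v$ with $p_u=p_v\neq0$. Thus $\Gamma$ is now an unsigned connected graph. Moreover, $P=\tfrac12J$ has rank one and $Q=P-A$.

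It remains to identify $G$. Write $q_v=\epsilon_v q$. Along each edge $\epsilon_u=-\epsilon_v$, so $G$ is bipartite with the parts $V_\pm=\{v:\epsilon_v=\pm1\}$. For non-adjacent distinct $u,v$, Lemma~\ref{lem-values}(3) gives $\tfrac12=P_{uv}=Q_{uv}=\tfrac12\epsilon_u\epsilon_v$, so $\epsilon_u=\epsilon_v$. Hence any two vertices in different parts are adjacent, and $G=K_{|V_+|,|V_-|}$. Then $\alpha=\max(|V_+|,|V_-|)=n/2$ forces $|V_+|=|V_-|=a$, so $n=2a$ and $G=K_{a,a}$ with all edges positive.

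To finish, I check that $\Rook_2((a,0),(0,0))$ has $a$ vertices in each of the squares $(1,1)$ and $(1,2)$, with all edges positive and running between these two squares; this is exactly $K_{a,a}$. I expect the main point needing care to be the switching step: one has to verify that after switching the edge signs really become all positive, that is, that $p_u=A_{uv}p_v$ is preserved. This is routine via Lemma~\ref{lem-parts}(4), since both sides transform by the factor $\delta_u\delta_v$.
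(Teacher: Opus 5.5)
Your proof is correct and follows essentially the paper's route: connectivity collapses the $\approx_P$-classes to one, you switch so that $P=\tfrac12J$, and then you split $V$ by the sign of $q_v$ relative to a fixed vector to get a complete bipartite graph, with $\alpha=n/2$ forcing equal parts. The one cosmetic difference is how you show the edges become positive. You derive it from the edge relation $p_u=A_{uv}p_v$, whereas the paper reads it off from $A(\Gamma')_{uv}=\tfrac12-Q'_{uv}\in[0,1]$ via Cauchy--Schwarz.
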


\begin{proof}
Here $d=e=\tfrac12$. By Lemma~\ref{lem-values}(2), $|P_{uv}|=\tfrac12=d$ on every edge $uv$, so $u\approx_Pv$ by Lemma~\ref{lem-values}(4). Since $\Gamma$ is connected, $\approx_P$ has only one class. Fix a vertex $w$ and write $p_v=\delta_vp_w$ with $\delta_v\in\{1,-1\}$. Let $D=\operatorname{diag}(\delta_v)$ and $\Gamma'=\Gamma^D$. By Lemma~\ref{lem-parts}(4), the vectors $\delta_vp_v=p_w$ form a Gram representation of $P'=A(\Gamma')_{+}$, and hence $P'=\tfrac12J$. Let $q'_v$ be Gram vectors of $Q'=A(\Gamma')_{-}=\tfrac12J-A(\Gamma')$.\\[-3mm]

By Lemma~\ref{lem-values}(1) applied to $\Gamma'$, we have $\|q'_v\|^2=\tfrac12$. So the Cauchy--Schwarz inequality gives $|Q'_{uv}|\leq\tfrac12$, and therefore $A(\Gamma')_{uv}=\tfrac12-Q'_{uv}\in[0,1]$ for $u\neq v$. Thus all edges of $\Gamma'$ are positive. By (F4), $Q'_{uv}=\tfrac12$ gives $q'_u=q'_v$ on non-edges, and $Q'_{uv}=-\tfrac12$ gives $q'_u=-q'_v$ on edges.\\[-3mm]

Let $X=\{v:q'_v=q'_w\}$ and $Y=\{v:q'_v=-q'_w\}$. Every vertex lies in exactly one of these sets, and $Y\neq\emptyset$ because $w$ has a neighbour. Two vertices are adjacent if and only if they lie in different sets. Hence $\Gamma'$ is the all-positive complete bipartite graph $K_{|X|,|Y|}$. Since $\alpha=\max(|X|,|Y|)=n/2$, we get $|X|=|Y|=a$ with $n=2a$. Finally, $\Rook_2\bigl((a,0),(0,0)\bigr)$ has two squares of size $a$ in row $1$ and no other vertices. They are joined by positive edges, so this graph is exactly $K_{a,a}$.
\end{proof}

We now turn to the remaining case $\alpha<n/2$ (i.e., $e<d$). The next lemma shows that the underlying graph of $\Gamma$ is then the underlying graph of a signed grid graph.

\begin{lemma}\label{lem-xor}
Let $\Gamma $ be a connected signed graph of $n$ vertices such that $\mathcal{E}(\Gamma)=2\tau(\Gamma).$   Suppose $e<d$ (i.e., $\alpha<n/2$). Then for distinct vertices $u,v$,
\[
u\sim v\iff\text{exactly one of } u\approx_Pv \text{ and } u\approx_Qv \text{ holds.}
\]
Consequently, let $R_1,\ldots,R_s$ be the $\approx_P$-classes and $C_1,\ldots,C_t$ the
$\approx_Q$-classes. Put $V_{ij}=R_i\cap C_j$ and let $W$ be the $s\times t$ matrix with
entries $w_{ij}=|V_{ij}|$. Regard $\Gamma[W]$ as a signed graph on $V$ whose square $(i,j)$ is $V_{ij}$. Then $\Gamma$ and $\Gamma[W]$ have the same edges, and they can differ only in the signs of these edges. Moreover, $W$ has no zero row and no zero column, and $\mathcal{B}(W)$ is connected.
\end{lemma}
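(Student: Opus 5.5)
The plan is to read everything off Lemma~\ref{lem-values}, using that $0<e<d$, so that $e$, $d$, $0$ and their negatives are distinguishable values.

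\emph{Edges.} Let $uv$ be an edge. By Lemma~\ref{lem-values}(2), $(P_{uv},Q_{uv})$ is either $(A_{uv}d,-A_{uv}e)$ or $(A_{uv}e,-A_{uv}d)$.
\begin{itemize}
\item In the first case $|P_{uv}|=d$, so $u\approx_P v$ by Lemma~\ref{lem-values}(4). Also $|Q_{uv}|=e<d$, so $u\not\approx_Q v$.
\item In the second case the roles swap: $u\approx_Q v$ and $u\not\approx_P v$.
\end{itemize}
So every edge has exactly one of the two relations.

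\emph{Non-edges.} Let $u\neq v$ be non-adjacent. By Lemma~\ref{lem-values}(3), $P_{uv}=Q_{uv}=:c$. By Lemma~\ref{lem-values}(4), $u\approx_Pv$ holds iff $|c|=d$, and $u\approx_Qv$ holds iff $|c|=d$. Hence either both relations hold or neither does. This proves the equivalence.

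\emph{Adjacency in $\Gamma[W]$.} Since $R_i$ and $C_j$ are the classes, $u\approx_P v$ iff $r(u)=r(v)$, and $u\approx_Q v$ iff $c(u)=c(v)$. By Definition~\ref{def-grid}, $u,v$ are adjacent in $\Gamma[W]$ iff they share exactly one of row or column. This is exactly the criterion just proved, so $\Gamma$ and $\Gamma[W]$ have the same underlying graph and can differ only in signs.

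\emph{Nonempty rows and columns.} Every $R_i$ and every $C_j$ is a nonempty class. Therefore $\sum_j w_{ij}=|R_i|>0$ and $\sum_i w_{ij}=|C_j|>0$, so $W$ has no zero row or column.

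\emph{Connectivity of $\mathcal{B}(W)$.} The graph $\Gamma[W]$ has the same underlying graph as the connected $\Gamma$, so it is connected. Lemma~\ref{lem-grid}(v) then gives that $\mathcal{B}(W)$ is connected.

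\emph{Main obstacle.} The only delicate point is the strictness $e<d$ at the edge step. It is what makes $|Q_{uv}|=e$ incompatible with $q_u=\pm q_v$, via the equality clause of (F4). This strictness is also why the boundary case $d=e$ had to be handled separately in Proposition~\ref{prop-boundary}.
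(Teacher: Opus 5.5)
Your proposal is correct and follows essentially the same route as the paper. You use Lemma~\ref{lem-values}(2),(4) together with $e\neq d$ for edges and Lemma~\ref{lem-values}(3) for non-edges, then identify rows and columns with the classes and conclude via the nonemptiness of classes and Lemma~\ref{lem-grid}(v).
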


\begin{proof}
Let $uv$ be an edge. By Lemma~\ref{lem-values}(2), $\{|P_{uv}|,|Q_{uv}|\}=\{d,e\}$, and
$d\neq e$. By Lemma~\ref{lem-values}(4), exactly one of $u\approx_Pv$ and $u\approx_Qv$ holds. Now let $u\neq v$ be non-adjacent. Then $P_{uv}=Q_{uv}$ by Lemma~\ref{lem-values}(3), so either both relations hold or neither does. This proves the equivalence.

By construction, $u\approx_Pv$ means that $u$ and $v$ lie in the same row, and $u\approx_Qv$ means that they lie in the same column. Comparing with Definition~\ref{def-grid}, we see that $\Gamma$ and $\Gamma[W]$ have the same edges. Each row and each column is an equivalence class, so it is nonempty. Since $\Gamma$ is connected, so is $\Gamma[W]$, and Lemma~\ref{lem-grid}(v) shows that $\mathcal{B}(W)$ is connected.
\end{proof}


Now we are ready to prove the following key result. By Lemma~\ref{lem-xor}, the underlying graph of an extremal signed graph with $e<d$ (i.e., $\alpha< \frac{n}{2}$) is the underlying graph of a signed grid graph $\Gamma[W]$. The next result shows that the signs are also those of $\Gamma[W]$, up to switching.

\begin{proposition}\label{prop-normal}
Let $\Gamma$ be a connected signed graph with $\mathcal{E}(\Gamma)=2\tau(\Gamma)$ and $e<d$, and let $W$ be as in Lemma~\ref{lem-xor}. Then $\Gamma\sim\Gamma[W]$. More precisely, there is a diagonal matrix $D$ with diagonal entries $\pm1$ such that
\[
A(\Gamma^D)=RR^{\top}-CC^{\top},\qquad P(\Gamma^D)=RR^{\top}-eJ,\qquad Q(\Gamma^D)=CC^{\top}-eJ ,
\]
where $R$ and $C$ are the row-incidence and column-incidence matrices of $\Gamma[W]$.
\end{proposition}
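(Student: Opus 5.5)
The plan is to find one switching $D$ under which every row of the grid carries a single Gram vector for $P$ and every column a single Gram vector for $Q$. Once that holds, the row Gram matrix $G$ (a matrix over rows) and the column Gram matrix $H$ (over columns) are forced to be $I-eJ$ by the entry values in Lemma~\ref{lem-values}. The first two steps below are routine; the third is the crux.

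\emph{Step 1 (local structure).} By Lemma~\ref{lem-xor} and Lemma~\ref{lem-values}(2),(4), I would first record the entries on each type of pair.
\begin{itemize}
\item If $u\sim v$ lie in the same row, then $|P_{uv}|=d$, hence $P_{uv}=A_{uv}d$, $Q_{uv}=-A_{uv}e$ and $p_u=A_{uv}p_v$.
\item If $u\sim v$ lie in the same column, then $Q_{uv}=-A_{uv}d$, $P_{uv}=A_{uv}e$ and $q_u=-A_{uv}q_v$.
\item If $u\neq v$ lie in the same square, then $p_u=\varepsilon p_v$ and $q_u=\varepsilon' q_v$. Since $P_{uv}=Q_{uv}$ and both have absolute value $d>0$, we get $\varepsilon=\varepsilon'$.
\end{itemize}
So a first switching makes all $p$-vectors and all $q$-vectors inside a square equal. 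Call them $p_{ij}$ and $q_{ij}$.

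\emph{Step 2 (reduction to a sign problem on $\mathcal{B}(W)$).} Fix representatives $\hat p_i$ for each row and $\hat q_j$ for each column, and write $p_{ij}=\rho_{ij}\hat p_i$ and $q_{ij}=\kappa_{ij}\hat q_j$ with signs $\rho_{ij},\kappa_{ij}$. Switching square $(i,j)$ by $s_{ij}$ multiplies both $\rho_{ij}$ and $\kappa_{ij}$ by $s_{ij}$. Rescaling the representatives by signs $r_i$ and $c_j$ is also free. Hence the goal $\rho\equiv\kappa\equiv1$ is achievable exactly when $\rho_{ij}\kappa_{ij}=r_ic_j$ on all edges of $\mathcal{B}(W)$. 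That is, the edge-signing $\theta_{ij}=\rho_{ij}\kappa_{ij}$ of the connected bipartite graph $\mathcal{B}(W)$ must be balanced.

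On a spanning tree of $\mathcal{B}(W)$ this can always be arranged. The remaining work is to show that every cycle of $\mathcal{B}(W)$ has $\theta$-product $+1$.

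\emph{Step 3 (the main obstacle: balance of $\theta$).} For a $4$-cycle with cells $(i,j),(i,l),(k,l),(k,j)$, take vertices $u\in V_{ij}$, $u'\in V_{il}$, $v\in V_{kl}$ and $v'\in V_{kj}$. The pairs $u,v$ and $u',v'$ are non-adjacent, so $P_{uv}=Q_{uv}$ and $P_{u'v'}=Q_{u'v'}$. Expressing all four inner products through the representatives gives $\theta$-product $+1$ provided $\langle\hat p_i,\hat p_k\rangle\neq0$. For longer cycles the same computation applies, with $P=Q$ equalities taken across chords of the board.

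The danger is vanishing inner products. To handle this I would first prove that $\langle\hat p_i,\hat p_k\rangle=\pm e\neq0$ whenever rows $i$ and $k$ share a column. This follows from Step 1, since $|P_{uv}|=e$ on column edges.

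Next I would propagate the non-adjacency relation $\langle\hat p_i,\hat p_k\rangle=\pm\langle\hat q_j,\hat q_l\rangle$ along $\mathcal{B}(W)$. The aim is to show that all these quantities have absolute value $e$. If a zero value threatens to survive, I would fall back on the global identity $PQ=0$ of Lemma~\ref{lem-parts}(1), with $P=R\,G\,R^\top$ and $Q=C\,H\,C^\top$ up to signs. The identity $PQ=0$ gives $GR^\top C H=0$, and this should force the nonzero pattern.

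\emph{Step 4 (conclusion).} With $\rho\equiv\kappa\equiv1$ after switching, we have $P=RGR^\top$ and $Q=CHC^\top$, where $G_{ii}=H_{jj}=d$.

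For cells $(i,j)$ and $(k,l)$ with $i\neq k$ and $j\neq l$, the non-adjacency condition gives $G_{ik}=H_{jl}$. By connectivity of $\mathcal{B}(W)$ this propagates to a common constant $c$.

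A column edge has sign $-1$ in $\Gamma[W]$. Since the switched graph agrees with $\Gamma[W]$ on rows and columns, Step 1 gives $G_{ik}=P_{uv}=A_{uv}e=-e$. Hence $c=-e$ and $G=(d+e)I-eJ=I-eJ$, and likewise $H=I-eJ$.

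Therefore $P=RR^\top-eJ$ and $Q=CC^\top-eJ$, and $A=P-Q=RR^\top-CC^\top$, which is $A(\Gamma[W])$ by Lemma~\ref{lem-grid}(i).
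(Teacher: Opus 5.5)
Your Steps 1, 2 and 4 are sound. Step 2 correctly reduces the proposition to the balance of the signing $\theta_{ij}=\rho_{ij}\kappa_{ij}$ on $\mathcal{B}(W)$. The gap is Step 3: that balance is the whole content of the proposition, and you do not prove it.

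\textbf{Long cycles.} Your $4$-cycle computation works because the two non-attacking pairs of cells $(i,j),(k,l)$ and $(i,l),(k,j)$ involve the same row pair $\{i,k\}$ and the same column pair $\{j,l\}$, so the unknown inner products cancel. Nothing proved at this stage restricts the shape of $W$; that happens only in Lemma~\ref{lem-additive}, which relies on the present proposition. So a priori $\mathcal{B}(W)$ could contain long cycles and no $4$-cycles. In that case no two relations $P_{uv}=Q_{uv}$ share both a row pair and a column pair, and ``the same computation'' has nothing to cancel. Showing that the $\theta$-product of a $2m$-cycle is $+1$ would need an induction on $m$. It would chain several such relations through different row and column pairs, together with the edge data of Step 1 (for instance $\langle\hat p_i,\hat p_k\rangle=-\theta_{ij}\theta_{kj}e$ whenever rows $i,k$ meet column $j$). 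None of this is supplied.

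\textbf{Non-vanishing.} This is also left open: you only hope that $PQ=0$ ``should force'' it. This part is easy by induction on distance in $\mathcal{B}(W)$. Along a shortest path $i,j_1,\ldots,j_m,k$ with $m\geq2$, the cells $(i,j_1)$ and $(k,j_m)$ are non-attacking. Hence $|\langle\hat p_i,\hat p_k\rangle|=|\langle\hat q_{j_1},\hat q_{j_m}\rangle|$, and $j_1,j_m$ are closer together.

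\textbf{The missing idea.} The paper's defect matrix removes the cycle analysis altogether.
\begin{itemize}
\item Switch so that $p_v=\hat p_{r(v)}$ for every $v$ (whole rows, not just squares). Then $q_v=\theta_v\hat q_{c(v)}$ with your $\theta$.
\item Your edge data give $A=RR^\top-\Theta CC^\top\Theta$. Hence $X:=RR^\top-P=\Theta CC^\top\Theta-Q$.
\item The first expression shows that every column of $X$ is constant along rows. The second shows that $\theta_uX_{uv}$ is constant along columns.
\item By connectivity, the space of vectors with these two properties has dimension at most $1$. Since $X_{vv}=1-d=e\neq0$, this forces $X=e\,ss^\top$ with $s_v=\pm1$ and $s$ in that space.
\end{itemize}
Such an $s$ is precisely a witness $\theta_{ij}=r_ic_j$ of the balance you need, valid for all cycles at once. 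Switching by $\operatorname{diag}(s)$ then gives $P=RR^\top-eJ$ and $Q=CC^\top-eJ$ directly, so your Step 4 also becomes unnecessary.
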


\begin{proof}
We use two facts throughout. First, by the proof of Lemma~\ref{lem-grid}(i), $(RR^{\top})_{uv}=1$ if $u,v$ lie in the same row and $(RR^{\top})_{uv}=0$ otherwise, and
similarly for $CC^{\top}$ and columns. Second, by Lemma~\ref{lem-xor}, two distinct vertices are adjacent if and only if they share exactly one of their row and their column. The proof is divided into eight claims.

\medskip\noindent\textbf{Claim 1.} \emph{After a switching, we may assume that there are vectors $\hat p_i$ and $\hat q_j$ and signs $\theta_v\in\{1,-1\}$ such that
\[
p_v=\hat p_{r(v)}\qquad\text{and}\qquad q_v=\theta_v\,\hat q_{c(v)}\qquad\text{for every } v .
\]
Moreover, $\|\hat p_i\|^2=\|\hat q_j\|^2=d$.}\\[2mm]
For each row $R_i$ pick $v_i\in R_i$ and put
$\hat p_i=p_{v_i}$. Since $R_i$ is a $\approx_P$-class, each $v\in R_i$ satisfies
$p_v=\delta_v\hat p_i$ for some $\delta_v\in\{1,-1\}$. Switch $\Gamma$ by $\operatorname{diag}(\delta_v)$. By Lemma~\ref{lem-parts}(4), the new Gram vectors are
$\delta_vp_v=\hat p_{r(v)}$ and $\delta_vq_v$. Changing the signs of Gram vectors does not change $\approx_P$ or $\approx_Q$, so the rows and columns are unchanged. We rename the switched graph $\Gamma$. For each column $C_j$ pick $w_j\in C_j$ and put $\hat q_j=q_{w_j}$. Since $C_j$ is a $\approx_Q$-class, $q_v=\theta_v\hat q_j$ with $\theta_v\in\{1,-1\}$ for each $v\in C_j$. The norms equal $d$ by Lemma~\ref{lem-values}(1). 

\medskip\noindent\textbf{Claim 2.} \emph{If $u,v$ lie in the same square, then
$\theta_u=\theta_v$.}\\[2mm]
We may assume $u\neq v$. Since $u,v$ share both their row and
their column, they are non-adjacent, so $P_{uv}=Q_{uv}$ by Lemma~\ref{lem-values}(3). By Claim~1, $P_{uv}=\|\hat p_i\|^2=d$ and $Q_{uv}=\theta_u\theta_v\|\hat q_j\|^2=\theta_u\theta_vd$. As $d>0$, we get $\theta_u\theta_v=1$. 

\medskip\noindent\textbf{Claim 3.} \emph{With $\Theta=\operatorname{diag}(\theta_v)$ we have $A=RR^{\top}-\Theta CC^{\top}\Theta$.}\\[2mm]
We compare the $(u,v)$ entries. If $u,v$ lie in the same square (including $u=v$), then $A_{uv}=0$, and the right-hand side is $1-\theta_u\theta_v=0$ by Claim~2. If $u,v$ share only a row, then $uv$ is an edge and the right-hand side is $1$. Here $P_{uv}=\|\hat p_{r(u)}\|^2=d$. By Lemma~\ref{lem-values}(2), $P_{uv}\in\{A_{uv}d,A_{uv}e\}$, and since $e\neq d$ this gives $A_{uv}=1$. If $u,v$ share only a column, then $uv$ is an edge and the right-hand side is $-\theta_u\theta_v$. Here $Q_{uv}=\theta_u\theta_vd$. By Lemma~\ref{lem-values}(2), $Q_{uv}\in\{-A_{uv}e,-A_{uv}d\}$. Since $|Q_{uv}|=d\neq e$, we get $Q_{uv}=-A_{uv}d$, so $A_{uv}=-\theta_u\theta_v$. In all other cases $u,v$ are non-adjacent and both sides are $0$. 

\medskip
We now consider the \emph{defect matrix} $X=RR^{\top}-P$. It is symmetric. Since $P=A+Q$,
Claim~3 gives $X=\Theta CC^{\top}\Theta-Q$. Thus, for all vertices $u,v$,
\begin{equation}\label{eq-Xtwo}
X_{uv}=(RR^{\top})_{uv}-P_{uv} \qquad\text{and}\qquad
X_{uv}=\theta_u\theta_v(CC^{\top})_{uv}-Q_{uv}.
\end{equation}

\medskip\noindent\textbf{Claim 4.} \emph{Let $v$ be any vertex. If $u,u'$ lie in the same row, then $X_{uv}=X_{u'v}$. If $u,u'$ lie in the same column, then $\theta_uX_{uv}=\theta_{u'}X_{u'v}$.}\\[2mm]
Let $u,u'$ lie in the same row $i$. Then $(RR^{\top})_{uv}=(RR^{\top})_{u'v}$, and $p_u=p_{u'}=\hat p_i$ by Claim~1, so $P_{uv}=P_{u'v}$. The first formula in \eqref{eq-Xtwo} gives $X_{uv}=X_{u'v}$.\\[-3mm]

Let $u,u'$ lie in the same column $j$. Multiplying the second formula in \eqref{eq-Xtwo} by $\theta_u$ gives $\theta_uX_{uv}=\theta_v(CC^{\top})_{uv}-\theta_uQ_{uv}$. Here
$(CC^{\top})_{uv}=(CC^{\top})_{u'v}$, and by Claim~1 $\theta_uQ_{uv}=\theta_u^2\langle\hat q_j,q_v\rangle=\langle\hat q_j,q_v\rangle$. The same holds for $u'$, so $\theta_uX_{uv}=\theta_{u'}X_{u'v}$. 

\medskip
Call two vertices \emph{neighbours} if they lie in the same row or in the same column.

\medskip\noindent\textbf{Claim 5.} \emph{Any two vertices $u,u'$ are joined by a sequence of vertices in which any two consecutive vertices are neighbours.}\\[2mm]
If $r(u)=r(u')$, there is nothing to prove. Otherwise, since $\mathcal{B}(W)$ is connected by Lemma~\ref{lem-xor}, it contains a path $r(u)=i_0,j_1,i_1,j_2,\ldots,j_m,i_m=r(u')$, in which rows and columns alternate. Each edge of this path is a nonempty square, so the squares
$(i_0,j_1),(i_1,j_1),(i_1,j_2),\ldots,(i_m,j_m)$ are nonempty. Consecutive squares in this list share a row or a column. Choose one vertex in each square, and add $u$ at the beginning and $u'$ at the end. This gives the required sequence. 

\medskip
Let $U$ be the set of vectors $y\in\mathbb{R}^{V}$ such that $y_u=y_{u'}$ whenever $u,u'$ lie in the same row, and $\theta_uy_u=\theta_{u'}y_{u'}$ whenever $u,u'$ lie in the same column.

\medskip\noindent\textbf{Claim 6.} \emph{$U$ is a subspace of $\mathbb{R}^V$ and $\dim U\leq1$.}\\[2mm]
The conditions defining $U$ are linear, so $U$ is a subspace. Fix a vertex $w$ and let $y\in U$ with $y_w=0$. If $y_u=0$ and $u'$ is a neighbour of $u$, then $y_{u'}=0$, where in the column case we use $\theta_{u'}=\pm1$. By Claim~5, every vertex is reached from $w$ through neighbours, so $y=0$. Hence the linear map $y\mapsto y_w$ is injective on $U$, and $\dim U\leq1$. 

\medskip\noindent\textbf{Claim 7.} \emph{There is a vector $s\in U$ with $s_v\in\{1,-1\}$ for every $v$ such that $X=e\,ss^{\top}$. In particular, $s_u=s_{u'}$ whenever $u,u'$ lie in the same row, and $\theta_us_u=\theta_{u'}s_{u'}$ whenever $u,u'$ lie in the same column.}\\[2mm]
By Claim~4, every column of $X$ lies in $U$. Hence the column space of $X$ is contained in $U$, and $\operatorname{rank}X\leq1$ by Claim~6. On the other hand, $X_{vv}=(RR^{\top})_{vv}-P_{vv}=1-d=e\neq0$ for every $v$, so $\operatorname{rank}X=1$. A
symmetric matrix of rank one has the form $X=\lambda yy^{\top}$, where $\lambda\neq0$ and $y$ is a nonzero vector spanning its column space. In particular $y\in U$. Comparing diagonal entries gives $\lambda y_v^2=e$ for every $v$, so all $|y_v|$ are equal and nonzero. Dividing $y$ by $|y_v|$, we obtain a vector $s\in U$ with entries $\pm1$ and $X=e\,ss^{\top}$. The last statement is the definition of $U$. 

\medskip\noindent\textbf{Claim 8.} \emph{Let $D=\operatorname{diag}(s_v)$. Then
$DXD=eJ$, $DRR^{\top}D=RR^{\top}$ and $D\,\Theta CC^{\top}\Theta\,D=CC^{\top}$.}\\[2mm] For every matrix $M$ we have $(DMD)_{uv}=s_us_vM_{uv}$. By Claim~7, $(DXD)_{uv}=e\,s_u^2s_v^2=e$. Next, $(DRR^{\top}D)_{uv}=s_us_v(RR^{\top})_{uv}$ is
nonzero only if $u,v$ lie in the same row, and then $s_us_v=1$ by Claim~7. Finally, $(D\Theta CC^{\top}\Theta D)_{uv}=(\theta_us_u)(\theta_vs_v)(CC^{\top})_{uv}$ is nonzero only if $u,v$ lie in the same column, and then $(\theta_us_u)(\theta_vs_v)=1$ by Claim~7.
\hfill$\diamond$

\medskip
We can now finish the proof. By Lemma~\ref{lem-parts}(4), Claim~3, Claim~8 and $P=RR^{\top}-X$,
\[
A(\Gamma^D)=DAD=RR^{\top}-CC^{\top},\qquad P(\Gamma^D)=DPD=RR^{\top}-eJ,
\]
and hence $Q(\Gamma^D)=P(\Gamma^D)-A(\Gamma^D)=CC^{\top}-eJ$. The switching in Claim~1 followed by $D$ is again a single switching of the original graph. Finally, $A(\Gamma^D)=A(\Gamma[W])$ by Lemma~\ref{lem-grid}(i).
\end{proof}


It remains to show that the matrix $W$ of Lemma~\ref{lem-xor} has the additive form required in Definition~\ref{def-rook}. We first bound the size of the board.

\begin{lemma}\label{lem-few}
Suppose $e<d$, and let $W$ be the $s\times t$ matrix of Lemma~\ref{lem-xor}. Then the board of $\Gamma[W]$ has at most $n/\alpha$ rows and at most $n/\alpha$ columns, that is,
\[
\max(s,t)\leq\frac{n}{\alpha}.
\]
\end{lemma}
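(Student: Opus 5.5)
The approach is to use the explicit form of $P$ and $Q$ from Proposition~\ref{prop-normal}. After switching (which changes neither $n$, $\alpha$, the rows nor the columns), we have
\[
P=RR^{\top}-eJ\succcurlyeq0 \qquad\text{and}\qquad Q=CC^{\top}-eJ\succcurlyeq0 .
\]
Positive semidefiniteness of $P$ will be tested against vectors that are constant on rows. For $y\in\mathbb{R}^s$, put $x=Ry$. Then $R^{\top}R=\operatorname{diag}(|R_1|,\ldots,|R_s|)$, and writing $\rho_i=|R_i|$ we get
\[
x^{\top}Px=\sum_i \rho_i^2y_i^2-e\Bigl(\sum_i\rho_iy_i\Bigr)^2\geq 0 .
\]

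The key step is the choice of $y$. Taking $y_i=1/\rho_i$ (every row is nonempty by Lemma~\ref{lem-xor}) turns the inequality into
\[
\sum_i 1-e\,s^2\geq0 ,
\]
that is, $s\leq 1/e=n/\alpha$. The same argument with $C$ and $Q$, using $y_j=1/|C_j|$, gives $t\leq n/\alpha$. Equivalently, one can note that $R^{\top}PR=D_\rho^2-e\,\rho\rho^{\top}$ and congruence by $D_\rho^{-1}$ gives $I_s-eJ_s\succcurlyeq0$. The largest eigenvalue of $eJ_s$ is $es$, so $es\leq1$.

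The main obstacle is only to check that the identities of Proposition~\ref{prop-normal} apply in the current, switched, frame. Switching by $D$ replaces $P$ by $DPD$, which is still PSD, and it leaves $R$, $C$, $n$ and $\alpha$ unchanged. Since $e=\alpha/n$, the bound $s\le 1/e$ is exactly $s\le n/\alpha$, and similarly for $t$.
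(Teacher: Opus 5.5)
Your proposal is correct and follows the paper's proof: both test the positive semidefinite matrix $RR^{\top}-eJ$ (respectively $CC^{\top}-eJ$) from Proposition~\ref{prop-normal} against a vector $x$ with $R^{\top}x=\mathbf{1}_s$, which gives $x^{\top}RR^{\top}x=s$, $x^{\top}Jx=s^2$, and hence $s-es^2\geq0$. The only difference is the test vector: the paper takes the indicator of one chosen vertex per row, whereas you spread the weight $1/|R_i|$ uniformly over each row, and both produce exactly the same two quantities.
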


\begin{proof}
Choose one vertex in each row, and let $x\in\mathbb{R}^{V}$ be the sum of the indicator vectors of these $s$ vertices. Then $R^{\top}x$ is the all-ones vector of length $s$, so
$x^{\top}RR^{\top}x=s$. Also $x^{\top}Jx=s^2$. By Proposition~\ref{prop-normal}, the matrix $RR^{\top}-eJ=P(\Gamma^D)$ is positive semidefinite. Hence
\[
0\leq x^{\top}\bigl(RR^{\top}-eJ\bigr)x=s-es^2 ,
\]
which gives $s\leq1/e=n/\alpha$. The same argument with $CC^{\top}-eJ=Q(\Gamma^D)$ and one vertex in each column gives $t\leq n/\alpha$.
\end{proof}


\begin{lemma}\label{lem-additive}
Suppose $e<d$, let $W$ be the $s\times t$ matrix of Lemma~\ref{lem-xor}, and put
$N=\max(s,t)$. Then, after adding zero rows or zero columns, $W$ becomes an $N\times N$ matrix with entries $w_{ij}=a_i+b_j$ for some $a,b\in\mathbb{Z}_{\geq0}^{N}$. Consequently, $\Gamma[W]=\Rook_N(a,b)$.
\end{lemma}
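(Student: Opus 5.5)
The plan is to use Lemma~\ref{lem-rook} and to show that $\alpha(\Gamma[W])$ equals its rook-average lower bound. Assume without loss of generality that $s\le t$; otherwise transpose via Lemma~\ref{lem-grid}(iii), which changes neither the underlying graph nor $\alpha$, and apply the argument to $W^\top$. The key inequality comes from Lemma~\ref{lem-few}: $t\le n/\alpha$, that is, $\alpha\le n/t$. Lemma~\ref{lem-rook}(b) gives $\alpha(\Gamma[W])\ge n/t$. By Lemma~\ref{lem-xor}, $\Gamma$ and $\Gamma[W]$ have the same underlying graph, so $\alpha(\Gamma[W])=\alpha$. Hence $\alpha=n/t$, and Lemma~\ref{lem-rook}(c) shows that $\eta$ is constant over all injections $[s]\to[t]$.

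Next apply Lemma~\ref{lem-rook}(d). There are two cases.
\begin{enumerate}[label=(\roman*)]
\item If $s=t=N$, then $w_{ij}=a_i+b_j$ for integers $a_i,b_j$, and by Remark~\ref{rem-nonneg} these may be taken nonnegative.
\item If $s<t$, then $w_{ij}=a_i$, with $a_i\ge0$ because the entries of $W$ are nonnegative. Add $t-s$ zero rows to obtain a $t\times t$ matrix. Put $a_i=0$ on the added rows and $b=\mathbf 0$. Then $w_{ij}=a_i+b_j$ holds on every row, including the zero rows.
\end{enumerate}
In both cases $N=\max(s,t)=t$. The case $s>t$ is symmetric: we add zero columns, and the form $w_{ij}=b_j$ fits $a=\mathbf 0$.

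Finally, Lemma~\ref{lem-grid}(iv) says that adding zero rows or columns does not change $\Gamma[W]$. So $\Gamma[W]=\Rook_N(a,b)$, with $a,b$ not both zero since $n\ge 2$.

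The main point to check is the transposition step in the case $s>t$. It is easier to avoid transposing at all. Lemma~\ref{lem-few} bounds both $s$ and $t$ by $n/\alpha$. Lemma~\ref{lem-rook} is stated for $s\le t$, but its proof applies symmetrically to injections $[t]\to[s]$ when $t\le s$, giving average $n/s$. So in every case we compare $\alpha$ with $n/\max(s,t)$ and conclude equality directly.

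A small subtlety is that Lemma~\ref{lem-few} was derived using the switched graph $\Gamma^D$. Switching does not alter $\alpha$ or the partition into rows and columns, so the bound applies to the same $W$.
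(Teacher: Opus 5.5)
Your proof is correct and follows the paper's argument essentially step for step. You squeeze $\alpha$ between the rook-average bound of Lemma~\ref{lem-rook}(b) and the board-size bound of Lemma~\ref{lem-few}, conclude that $\eta$ is constant, read off the additive form from Lemma~\ref{lem-rook}(d) together with Remark~\ref{rem-nonneg}, and handle $s>t$ by transposing via Lemma~\ref{lem-grid}(iii) (your alternative of running Lemma~\ref{lem-rook} directly on injections $[t]\to[s]$ is only a cosmetic variant of that step).
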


\begin{proof}
By Lemma~\ref{lem-xor}, $\Gamma[W]$ has the same underlying graph as $\Gamma$. Hence
$\alpha(\Gamma[W])=\alpha$ and $\sum_{i,j}w_{ij}=n$.\\[2.5mm]
\noindent\textbf{Case: $s\leq t$.}\\[1.5mm]
By Lemma~\ref{lem-rook}(b), $\alpha\geq n/t$. By Lemma~\ref{lem-few},
$t\leq n/\alpha$, that is, $\alpha\leq n/t$. So $\alpha=n/t$, and Lemma~\ref{lem-rook}(c),(d) gives two possibilities. If $s<t$, then $w_{ij}=a_i$ for all $i,j$. Adding $t-s$ zero rows does not change $\Gamma[W]$ by Lemma~\ref{lem-grid}(iv), and the result is
$\Rook_t\bigl((a_1,\ldots,a_s,0,\ldots,0),\mathbf{0}\bigr)$. If $s=t$, then $w_{ij}=a_i+b_j$ for some integers $a_i,b_j$. By Remark~\ref{rem-nonneg} they may
be chosen nonnegative, and so $\Gamma[W]=\Rook_s(a,b)$.\\[2mm]
\noindent\textbf{Case: $s>t$.}\\[1.5mm]
By Lemma~\ref{lem-grid}(iii), $\Gamma[W^{\top}]=-\Gamma[W]$ has the same
underlying graph as $\Gamma$, and its board has $t$ rows and $s$ columns. Applying the previous case to $W^{\top}$ gives $w_{ij}=b_j$ for all $i,j$. Adding $s-t$ zero columns, we obtain $\Gamma[W]=\Rook_s\bigl(\mathbf{0},(b_1,\ldots,b_t,0,\ldots,0)\bigr)$.
\end{proof}

We can now complete the proof of Theorem~\ref{thm-nec}.

\begin{proof}[\textbf{Proof of Theorem~\ref{thm-nec}}]
If $d=e$, then $\Gamma\sim\Rook_2\bigl((a,0),(0,0)\bigr)$ by Proposition~\ref{prop-boundary}. If $d>e$, then $\Gamma\sim\Gamma[W]$ by Proposition~\ref{prop-normal}, and
$\Gamma[W]=\Rook_N(a,b)$ by Lemma~\ref{lem-additive}. Finally, $N\geq2$, because $\Gamma$ has an edge while $\Rook_1(a,b)$ has none.
\end{proof}

\section{Proof of the main theorem}\label{sec-proof}

\begin{proof}[Proof of Theorem~\ref{thm-main}]
The inequality $\mathcal{E}(\Gamma)\geq2\tau(\Gamma)$ is Theorem~\ref{thm-ineq}. It remains to characterize equality.\\[-3mm]

Suppose first that every connected component of $\Gamma$ is switching equivalent to an additive rook graph. By Theorem~\ref{thm-suff}, every component attains equality, and therefore $\mathcal{E}(\Gamma)=2\tau(\Gamma)$ by Corollary~\ref{cor-comp}.\\[-3mm]

Conversely, suppose that $\mathcal{E}(\Gamma)=2\tau(\Gamma)$. By Corollary~\ref{cor-comp}, every connected component of $\Gamma$ attains equality. A component with one vertex is $K_1=\Rook_1\bigl((1),(0)\bigr)$. A component with at least two vertices is switching equivalent to an additive rook graph by Theorem~\ref{thm-nec}. Hence every component of $\Gamma$ is switching equivalent to an additive rook graph.
\end{proof}

\section{Consequences and limitations}\label{sec-consequences}

We call a signed graph $\Gamma$ \emph{extremal} if $\mathcal{E}(\Gamma)=2\tau(\Gamma)$. In this section we derive some consequences of Theorem~\ref{thm-main}. We compute the basic parameters of a connected extremal signed graph, decide when it is balanced or antibalanced, and recover the characterization of extremal unsigned graphs.

\subsection{Unsigned graphs}\label{subsec-unsigned}

\begin{corollary}\label{cor-inv}
Let $\Gamma$ be a connected extremal signed graph with $\Gamma\sim\Rook_N(a,b)$. Then
\[
n=N\ell,\qquad \alpha(\Gamma)=\ell,\qquad \mathcal{E}(\Gamma)=2(N-1)\ell .
\]
Moreover, every vertex $v$ has the same vertex energy $|A(\Gamma)|_{vv}=\frac{2(N-1)}{N}$.
\end{corollary}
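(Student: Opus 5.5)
Since switching preserves the spectrum, the underlying graph, and hence $n$, $\alpha$, $\tau$, $\mathcal{E}$ and the diagonal of $|A|$ (by Lemma~\ref{lem-parts}(4), $|A(\Gamma^D)|=D|A|D$ has the same diagonal as $|A|$), we may assume $\Gamma=\Rook_N(a,b)$. The first two identities are then exactly Corollary~\ref{cor-rookinv}: $n=N\ell$ and $\alpha=\ell$. For the energy, Theorem~\ref{thm-suff} gives $\mathcal{E}(\Gamma)=2\tau(\Gamma)$, and Corollary~\ref{cor-rookinv} gives $\tau=(N-1)\ell$, so $\mathcal{E}(\Gamma)=2(N-1)\ell$.

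For the vertex energy we use the equality machinery of Section~\ref{sec-equality}. Since $\Gamma$ is connected and extremal, it has no isolated vertices when $n\geq2$. Proposition~\ref{prop-rigid}(a) then gives $B_{vv}=2d=2\tau/n$ for every $v$. Substituting, this equals $2(N-1)\ell/(N\ell)=2(N-1)/N$.

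The degenerate case $n=1$ must be handled separately. Here $\Gamma=K_1$, which can be written as $\Rook_1((1),(0))$ with $N=1$. Both sides are $0$, since $|A|=0$ and $2(N-1)/N=0$, so the formula still holds. Note that $\ell$ is not always positive in general, but for a nonempty graph $\ell=n/N\geq1$, so the division above is legitimate.

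The only subtle point is that the representation $\Gamma\sim\Rook_N(a,b)$ need not be unique. The formula must therefore hold for any such $N$. It does, because the computation above uses only the invariants $n$, $\tau$ and the diagonal of $|A|$, all of which are independent of the representation. The ratio $\tau/n=(N-1)/N$ therefore determines $N$ uniquely.
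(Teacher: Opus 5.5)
Your proof is correct and follows the paper's argument: switching invariance, then Corollary~\ref{cor-rookinv} and Theorem~\ref{thm-suff} for $n$, $\alpha$, $\mathcal{E}$, then Proposition~\ref{prop-rigid}(a) for the vertex energy when $n\ge2$, with $K_1$ handled separately. One small slip: by Definition~\ref{def-rook} the vectors $a,b$ are not both zero, so $\ell\ge1$ always holds, not only for nonempty graphs.
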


\begin{proof}
Switching changes neither $n$, $\alpha$ nor the energy. So the first three equalities follow from Corollary~\ref{cor-rookinv} and Theorem~\ref{thm-suff}. If $n\geq2$, then
Proposition~\ref{prop-rigid}(a) gives $|A(\Gamma)|_{vv}=2\tau(\Gamma)/n=2(N-1)/N$. If $n=1$, then $\Gamma=K_1$, $N=1$, and both sides are $0$.
\end{proof}


For an additive rook graph we say that a row or a column of the board is \emph{nonempty} if it contains at least one vertex.

\begin{theorem}\label{thm-balance}
Let $\Gamma$ be a connected extremal signed graph with $\Gamma\sim\Rook_N(a,b)$.
\begin{enumerate}[label=(\alph*)]
  \item The following are equivalent:
  \begin{enumerate}[label=(\roman*)]
    \item $\Gamma$ is balanced;
    \item the board of $\Rook_N(a,b)$ has at most two nonempty rows;
    \item $\Gamma$ is switching equivalent to $K_1$, to $K_{x,\ldots,x}$ with $r\geq2$ parts, or to $H_r(x,y)$ with $r\geq3$.
  \end{enumerate}
  \item The following are equivalent:
  \begin{enumerate}[label=(\roman*)]
    \item $\Gamma$ is antibalanced;
    \item the board of $\Rook_N(a,b)$ has at most two nonempty columns;
    \item $\Gamma$ is switching equivalent to $K_1$, to $-K_{x,\ldots,x}$ with $r\geq2$ parts, or to $-H_r(x,y)$ with $r\geq3$.
  \end{enumerate}
  \item If the board has at least three nonempty rows and at least three nonempty columns, then $\Gamma$ is neither balanced nor antibalanced.
\end{enumerate}
\end{theorem}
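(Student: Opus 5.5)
Since balance and antibalance are switching invariants, we may assume $\Gamma=\Rook_N(a,b)$ throughout. By Lemma~\ref{lem-grid}(iii), $-\Rook_N(a,b)=\Gamma[W^\top]=\Rook_N(b,a)$, which exchanges rows and columns. So (b) follows from (a) applied to $-\Gamma$, and we only need to prove (a) and (c).

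\emph{(a), (ii)$\Rightarrow$(i) and (iii).} Suppose at most two rows are nonempty. If only row $i$ is nonempty, then all squares in it have size $a_i+b_j$, and there are no negative edges. If some other row $k$ is empty, then $a_k+b_j=0$ for every $j$, so $b\equiv0$ and every square of row $i$ has size $a_i$. The graph is then the all-positive $K_{x,\ldots,x}$ with $N$ parts, or $K_1$ when $N=1$. Now suppose exactly two rows $i,k$ are nonempty. If $N\ge3$, some row is empty, which forces $b=0$. Then switching at row $k$, as in Example~\ref{ex-members}, makes every edge positive and yields $H_N(a_i,a_k)$ with $N\ge3$. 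The case $N=2$ must be treated separately, since there $b$ need not vanish. The four squares have sizes $a_1+b_1$, $a_1+b_2$, $a_2+b_1$ and $a_2+b_2$, and the graph is a signed $4$-partite-like graph. I would check directly that switching at row $2$ makes every cycle positive: after switching at row $2$, every edge is positive, because an edge inside row $2$ has both ends switched and an edge inside a column is switched at exactly one end. This argument works for any two nonempty rows and any $N$, so balance follows in general. For the explicit form (iii) when $N=2$, the underlying graph is $C_4$-blown-up, i.e.\ complete bipartite between the diagonal pairs. Since connected extremal graphs satisfy $\alpha=\ell=n/N$, the case $N=2$ gives $K_{x,x}$. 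Hence (ii) implies both (i) and (iii).

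\emph{(a), (iii)$\Rightarrow$(i).} This is immediate, since $K_{x,\ldots,x}$ and $H_r(x,y)$ are all-positive, hence balanced.

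\emph{(a), (i)$\Rightarrow$(ii), and part (c).} This is the main step. We exhibit a negative cycle whenever three rows $i_1,i_2,i_3$ are nonempty. If two of these rows have nonempty squares in a common column $j$, take vertices $u\in V_{i_1j}$ and $v\in V_{i_2j}$, joined by a negative edge. Then look for $w$ in row $i_1$ or $i_2$ in a different column, or in a third row of column $j$, to close a triangle. One checks that a triangle $u,v,w$ with $u,v$ in the same column, $w$ in row $r(u)$, and $w$ in column $c(v)$ is impossible. So instead I would use the following $4$-cycle or triangle. A triangle with vertices in three distinct rows of the same column is entirely negative, hence negative. A triangle with vertices in three distinct columns of the same row is positive. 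The key configuration is three squares $(i_1,j),(i_2,j),(i_3,j)$ that are nonempty in one column. This uses the additivity of the sizes $a_i+b_j$: if three rows are nonempty, choose $j$ with $b_j$ maximal. A row $i$ is nonempty iff $a_i+b_{j'}>0$ for some $j'$, which implies $a_i+b_j>0$. Hence column $j$ meets all three rows, and we obtain a negative triangle, so $\Gamma$ is not balanced. By the row--column symmetry, three nonempty columns give a positive triangle inside a row of the board, i.e.\ a negative triangle in $-\Gamma$, so $\Gamma$ is not antibalanced. Together these prove (i)$\Rightarrow$(ii) and (c).

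The delicate point I expect is the bookkeeping for $N=2$ in (iii) and matching the exact list ($r\ge2$ versus $r\ge3$). I would handle it using Proposition~\ref{prop-boundary} together with $\alpha=n/N$.
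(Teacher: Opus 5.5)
Your proposal is correct and follows essentially the same route as the paper. It switches at one of the two nonempty rows for (ii)$\Rightarrow$(i), finds a negative triangle in a column meeting three nonempty rows for (i)$\Rightarrow$(ii) and (c), identifies $K_{x,\ldots,x}$, $H_N(a_i,a_k)$ and $K_{\ell,\ell}$ (the last via the diagonal pairs $V_{11}\cup V_{22}$ and $V_{12}\cup V_{21}$) for (iii), and reduces (b) to (a) via $-\Rook_N(a,b)=\Rook_N(b,a)$. Choosing a column $j$ that maximizes $b_j$ is a tidy replacement for the paper's split into $b\neq\mathbf{0}$ and $b=\mathbf{0}$; for $N=1$ you should state explicitly that connectivity forces $\Gamma=K_1$.
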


\begin{proof}
Switching does not change balance or antibalance. Hence we may assume $\Gamma=\Rook_N(a,b)$.

\noindent(a) We first show that (ii) implies (i). If only one row is nonempty, every edge joins two vertices of that row and is positive. If exactly two rows are nonempty, switch at all vertices of one of them. A row edge has both ends switched or neither, so it stays positive. A column edge joins the two nonempty rows, so exactly one of its ends is switched, and it becomes positive. In both cases $\Gamma$ is switching equivalent to an all-positive graph, and so it is balanced.\\[-3mm]

Next we show that (i) implies (ii). Suppose rows $i_1,i_2,i_3$ are nonempty. We claim that some column meets all three of them in nonempty squares. If $b_j>0$ for some $j$, then every square of column $j$ has $a_i+b_j>0$ vertices. If $b=\mathbf{0}$, then $a_{i_1},a_{i_2},a_{i_3}>0$, and every square in these three rows is nonempty. Taking one vertex from each of the three squares, we obtain three vertices in the same column and in different rows. They are pairwise joined by negative edges, so they form a negative triangle. Hence $\Gamma$ is not balanced.\\[-3mm]

It remains to show that (ii) is equivalent to (iii). The graphs listed in (iii) are all-positive, so (iii) implies (i), and hence (ii). Conversely, assume (ii).\\[-3mm]

Suppose first that $b_j>0$ for some $j$. Then every row is nonempty, so $N\leq2$. If $N=1$, the graph has no edges, and since it is connected, $\Gamma=K_1$. If $N=2$, switch at the vertices of row $2$. As above, every edge becomes positive. Row edges join $V_{11}$ with $V_{12}$ and $V_{21}$ with $V_{22}$, and column edges join $V_{11}$ with $V_{21}$ and $V_{12}$ with $V_{22}$. Hence the switched graph is the complete bipartite graph with parts $V_{11}\cup V_{22}$ and $V_{12}\cup V_{21}$. Both parts have $\ell$ vertices, so $\Gamma\sim K_{\ell,\ell}$.\\[-3mm]

Suppose now that $b=\mathbf{0}$, so that $w_{ij}=a_i$ and the nonempty rows are those with
$a_i>0$. If only row $i$ is nonempty, then $\Gamma$ is the all-positive $K_{a_i,\ldots,a_i}$ with $N$ parts, and connectivity gives either $N\geq2$ or $\Gamma=K_1$. If exactly the rows $i$ and $k$ are nonempty, then switching at the vertices of row $k$ gives the unsigned graph $H_N(a_i,a_k)$, as in Example~\ref{ex-members}. For $N=2$ this graph is $K_{a_i+a_k,\,a_i+a_k}$, and for $N\geq3$ it is $H_N(a_i,a_k)$ with $N\geq3$.\\[-2mm]

\noindent(b) By Lemma~\ref{lem-grid}(iii), $-\Gamma=\Rook_N(b,a)$, and the rows of its board are the columns of the board of $\Gamma$. Moreover, $-\Gamma$ is connected and extremal, since negation changes neither the energy nor the underlying graph. Now apply (a) to $-\Gamma$.\\[-3mm]

\noindent(c) By the proof of (a), at least three nonempty rows give a negative triangle, so $\Gamma$ is not balanced. Similarly, at least three nonempty columns give three vertices in one row and in different columns, which form a positive triangle. This triangle is negative in $-\Gamma$, so $\Gamma$ is not antibalanced.
\end{proof}

\begin{remark}
Cases (a) and (b) overlap exactly when the board has at most two nonempty rows and at most two nonempty columns. Up to switching, this happens only for $K_1$ and $K_{x,x}$. These are the connected extremal signed graphs that are both balanced and antibalanced.
\end{remark}

Table~\ref{tab-balance} summarizes Theorem~\ref{thm-balance}.

\begin{table}[ht]
\centering
\begin{tabular}{@{}>{\raggedright\arraybackslash}p{3.2cm}>{\raggedright\arraybackslash}p{4.4cm}>{\raggedright\arraybackslash}p{6.4cm}@{}}
\toprule
Connected extremal $\Gamma$ & Board of $\Rook_N(a,b)$ & Graphs up to switching\\
\midrule
balanced & at most two nonempty rows & $K_1$, $K_{x,\ldots,x}$, $H_r(x,y)$\\
antibalanced & at most two nonempty columns & $K_1$, $-K_{x,\ldots,x}$, $-H_r(x,y)$\\
neither & at least three nonempty rows and at least three nonempty columns & for example
$\Rook_3\bigl((1,1,1),\mathbf{0}\bigr)$ and $\Rook_3\bigl((1,0,0),(0,1,0)\bigr)$\\
\bottomrule
\end{tabular}
\caption{Balance of the connected extremal signed graphs.}
\label{tab-balance}
\end{table}

Finally, we recover the characterization of extremal unsigned graphs due to Mojallal~\cite{Mojallal}.

\begin{corollary}\label{cor-unsigned}
An unsigned graph $G$ satisfies $\mathcal{E}(G)=2\tau(G)$ if and only if every connected
component of $G$ is $K_1$, a complete multipartite graph $K_{x,\ldots,x}$ with $r\geq2$ parts, or a graph $H_r(x,y)$ with $r\geq3$.
\end{corollary}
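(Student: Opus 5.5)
The plan is to reduce this statement to Theorem~\ref{thm-main} together with Theorem~\ref{thm-balance}(a). An unsigned graph $G$ is the signed graph $(G,+1)$. Its energy and its vertex cover number are those of $G$. By Theorem~\ref{thm-main}, $\mathcal{E}(G)=2\tau(G)$ holds if and only if every connected component $\Gamma_i$ of $(G,+1)$ is switching equivalent to some $\Rook_N(a,b)$. By Corollary~\ref{cor-comp}, each such component is itself extremal. So the problem becomes: identify the all-positive connected extremal signed graphs.

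\textbf{Necessity.} Let $\Gamma_i$ be a connected component of $(G,+1)$.
\begin{itemize}
\item Since all its edges are positive, $\Gamma_i$ is balanced.
\item By Theorem~\ref{thm-balance}(a), the implication (i)$\Rightarrow$(iii) shows that $\Gamma_i$ is switching equivalent to $K_1$, to an all-positive $K_{x,\ldots,x}$ with $r\geq2$ parts, or to an all-positive $H_r(x,y)$ with $r\geq3$.
\item Switching equivalence preserves the underlying graph. Two all-positive signed graphs that are switching equivalent have the same underlying graph, hence they are equal as unsigned graphs.
\item Therefore the component of $G$ is literally $K_1$, $K_{x,\ldots,x}$ or $H_r(x,y)$.
\end{itemize}

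\textbf{Sufficiency.} Here I use the table in Example~\ref{ex-members}.
\begin{itemize}
\item $K_1=\Rook_1((1),(0))$.
\item $K_{x,\ldots,x}$ with $r$ parts is $\Rook_r((x,0,\ldots,0),\mathbf{0})$.
\item $H_r(x,y)$ is switching equivalent to $\Rook_r((x,y,0,\ldots,0),\mathbf{0})$; the switching at $Y_1\cup\cdots\cup Y_r$ is explained after the table.
\end{itemize}
Hence every component is switching equivalent to an additive rook graph. Theorem~\ref{thm-main}, or Theorem~\ref{thm-suff} combined with Corollary~\ref{cor-comp}, then gives equality.

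\textbf{Main obstacle.} The only delicate point is the bookkeeping on the parameters, which is where I would be careful.
\begin{itemize}
\item The case $H_2(x,y)$ does not need to be listed separately. It is $K_{x+y,x+y}$, which is already of the form $K_{x,\ldots,x}$ with two parts.
\item Theorem~\ref{thm-balance}(a)(iii) was stated with $r\geq3$ for $H_r$, so the list above matches it.
\item One must also check that the switching equivalence in Theorem~\ref{thm-balance} respects the identification with the unsigned graph. This is immediate because balance and the underlying graph are switching invariants.
\end{itemize}
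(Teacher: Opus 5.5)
Your proposal is correct and follows the paper's own proof essentially step for step. The necessity direction uses Corollary~\ref{cor-comp} and Theorem~\ref{thm-main} to make each component extremal and switching equivalent to an additive rook graph, notes balance from positivity, applies Theorem~\ref{thm-balance}(a), and uses that switching preserves the underlying graph; the converse goes through Example~\ref{ex-members} and Theorem~\ref{thm-main} (or Theorem~\ref{thm-suff}) together with Corollary~\ref{cor-comp}.
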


\begin{proof}
Regard $G$ as the all-positive signed graph. Suppose $\mathcal{E}(G)=2\tau(G)$. By Corollary~\ref{cor-comp}, every connected component is extremal, and it is balanced because all its edges are positive. By Theorem~\ref{thm-main}, it is switching equivalent to an additive rook graph. By Theorem~\ref{thm-balance}(a), it is therefore switching equivalent to one of the listed graphs. Switching does not change the underlying graph, so the component is isomorphic to that graph. Conversely, each listed graph is switching equivalent to an additive rook graph by Example~\ref{ex-members}, so it is extremal by Theorem~\ref{thm-main}. Corollary~\ref{cor-comp} then gives $\mathcal{E}(G)=2\tau(G)$.
\end{proof}


Corollary~\ref{cor-unsigned} shows that the extremal unsigned graphs are exactly the balanced connected extremal signed graphs, up to switching. The family of extremal signed graphs is strictly larger. In particular, a signed graph can be extremal even when its underlying graph is not extremal as an unsigned graph.

\begin{example}\label{ex-K3K3}
Let $\Gamma=\Rook_3\bigl((1,1,1),\mathbf{0}\bigr)$. This is the graph $K_3\,\square\,K_3$ with positive rows and negative columns. By Corollary~\ref{cor-rookinv} we have $\tau(\Gamma)=6$. The spectrum of $\Gamma$ is $\{3^{[2]},0^{[5]},(-3)^{[2]}\}$, so $\mathcal{E}(\Gamma)=12=2\tau(\Gamma)$. On the other hand, the underlying graph $K_3\,\square\,K_3$ has spectrum $\{4^{[1]},1^{[4]},(-2)^{[4]}\}$ and energy $16=\mathcal{E}(K_3\,\square\,K_3)>2\tau(K_3\,\square\,K_3)=12$.
\end{example}


\subsection{Failure of the bound for $\mathbb{T}$-gain graphs}\label{subsec-gain}

A \emph{complex unit gain graph} (or \emph{$\mathbb{T}$-gain graph}) $\Psi=(G,\psi)$ assigns to each oriented edge $\overrightarrow{uv}$ a complex number $\psi(\overrightarrow{uv})$ of modulus one, with $\psi(\overrightarrow{vu})=\overline{\psi(\overrightarrow{uv})}$ \cite{Reff, MehatariKannanSamanta, SamantaKannanSpectrum}. Its adjacency matrix $A(\Psi)$ is the Hermitian matrix with $A(\Psi)_{uv}=\psi(\overrightarrow{uv})$ if $u\sim v$ and $A(\Psi)_{uv}=0$ otherwise. The Hermitian adjacency matrix of a mixed graph \cite{GuoMohar,LiuLi} is the special case in which every gain is $1$ (undirected edge) or $\pm i$ (arc). Signed graphs are exactly the $\mathbb{T}$-gain graphs with all gains in $\{1,-1\}$. The next example shows that Theorem~\ref{thm-main} does not extend beyond this case.

\begin{example}\label{ex-gain}
Let $\Psi=(K_3,\psi)$ be the $\mathbb{T}$-gain graph on the vertices $v_1,v_2,v_3$ with
$\psi(\overrightarrow{v_1v_2})=\psi(\overrightarrow{v_2v_3})=\psi(\overrightarrow{v_3v_1})=i$. Then
\[
A(\Psi)=\begin{bmatrix}
0 & i & -i\\
-i & 0 & i\\
i & -i & 0
\end{bmatrix},
\]
which is also the Hermitian adjacency matrix of the directed triangle $v_1\to v_2\to v_3\to v_1$. Its characteristic polynomial is $\lambda^3-3\lambda$, so its eigenvalues are $-\sqrt3$, $0$ and $\sqrt3$. Since $\tau(K_3)=2$, we get
\[
\mathcal{E}(\Psi)=2\sqrt3<4=2\tau(K_3).
\]
\end{example}


It can be seen that the signed graphs form the largest class of $\mathbb{T}$-gain graphs to which Theorem~\ref{thm-main} applies in its present form. For $\mathbb{T}$-gain graphs only weaker bounds are known, such as $\mathcal{E}(\Psi)\geq2\tau(G)-2c(G)$, where $c(G)$ is the number of odd cycles of $G$~\cite{GainWM}.

\vspace*{3mm}

\noindent
\textbf{Acknowledgment:}
Aniruddha Samanta thanks the Birla Institute of Technology, Mesra, for financial support under the Institute Seed Money Scheme (Ref.\ No.\ DRIE/SMS/DRIE-10989/2026-27/1977). K. C. Das is supported by the National Research Foundation funded by the Korean government (Grant No. RS-2026-25475577).

\vspace*{3mm}

\noindent
\textbf{Data Availability:} Not applicable.

\vspace*{3mm}

\noindent
\textbf{Declaration of competing interest:} The authors declare that they have no competing financial interests or personal relationships that could have influenced this work.

 \vspace*{3mm}


\end{document}